\newcommand{\dprk}{\operatorname{dp-rk}}
\newtheorem{theorem}{Theorem}[section]
\newtheorem{qst}[theorem]{Question}
\newtheorem{thm}[theorem]{Theorem}
\newtheorem{prop}[theorem]{Proposition}
\newtheorem{lem}[theorem]{Lemma}
\newtheorem{fact}[theorem]{Fact}
\newtheorem{lemma}[theorem]{Lemma}
\newtheorem{conj}{Conjecture}
\theoremstyle{definition}
\theoremstyle{remark}
\newcommand{\mfrak}{\mathfrak{m}}
\newcommand{\valp}{\mathrm{Val}_p}
\newcommand{\pring}{(\Z_p,+,\times}
\newcommand{\rad}{\operatorname{rad}}
\ProvideTextCommandDefault{\cprime}{(U+042C)}
\newcommand{\ksh}{K^{\mathrm{Sh}}}
\newcommand{\sub}{\subseteq}
\newcommand{\st}{\operatorname{st}}
\newcommand{\Sh}[1]{\ensuremath{\mathscr{#1}^{\mathrm{Sh}}}}
\newcommand{\nip}{\mathrm{NIP}}
\newcommand{\Cal}[1]{\ensuremath{\mathcal{#1}}}
\newcommand{\Sa}[1]{\ensuremath{\mathscr{#1}}}
\newcommand{\B}{\mathbb{B}}
\newcommand{\Z}{\mathbb{Z}}
\newcommand{\N}{\mathbb{N}}
\newcommand{\Q}{\mathbb{Q}}
\newcommand{\R}{\mathbb{R}}
\newcommand{\F}{\mathbb{F}}
\DeclareMathOperator{\Gl}{Gl}
\begin{document}
\title[]{A P-adic structure which does not interpret an infinite field but whose Shelah completion does}

\author{Erik Walsberg}
\address{Department of Mathematics, Statistics, and Computer Science\\
Department of Mathematics\\University of California, Irvine, 340 Rowland Hall (Bldg.\# 400),
Irvine, CA 92697-3875}
\email{ewalsber@uci.edu}
\urladdr{http://www.math.illinois.edu/\textasciitilde erikw}

\date{\today}

\maketitle

\begin{abstract}
We give a $p$-adic example of a structure whose Shelah completion interprets $\mathbb{Q}_p$ but which does not (provided an extremely plausible conjecture holds) interpret an infinite field.
In the final section we discuss the significance of such examples for a possible future geometric theory of $\mathrm{NIP}$ structures.
\end{abstract}

\section{Introduction}
\noindent
In \cite{big-nip,field-in-Shelah} we described $\nip$ structures $\Sa H$ such that $\Sa H$ does not interpret an infinite field but the Shelah completion of $\Sa H$ does.
Here we describe a more natural example, modulo a reasonable conjecture.
Fix a prime $p$ and let $K$ be a $(2^{\aleph_0})^+$-saturated elementary extension of $\Q_p$.
Let $\valp : K^\times \to \Gamma$ be the $p$-adic valuation on $K$ and $V$ be the valuation ring of $\valp$, recall that $\valp$ is $K$-definable.
Given $a \in K$ and $t \in \Gamma$ let $B(a,t)$ be the ball with center $a$ and radius $t$, i.e. the set of $a' \in K$ such that $\valp(a - a') \geq t$.
Let $\B$ be the set of balls in $V$.
Let $\Gamma_\geq$ be the set of nonnegative elements of $\Gamma$.
Let $\sim$ be the equivalence relation on $V \times \Gamma_\geq$ where $(a,t) \sim (a',t')$ if and only if $B(a,t) = B(a',t')$. 
Identify $\B$ with $(V \times \Gamma_\geq)/\!\sim$, consider $\B$ to be a $K$-definable set of imaginaries, and let $\Sa B$ be the structure induced on $\B$ by $K$.

\begin{thm}
\label{thm:2}
The Shelah completion of $\Sa B$ interprets $\Q_p$.
\end{thm}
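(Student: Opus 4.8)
\medskip\noindent\emph{Sketch of the intended argument.}\quad
The plan is to interpret $\Q_p$ in $\Sh B$ as the fraction field of a copy of $\Z_p$, the latter presented as a definable quotient of a definable subset of $\B$. Recall that $\Sa B$ is interpretable in $K$, so $\Sa B=\Sa B(K)\prec\Sa B(K^{*})$ for every $K^{*}\succ K$, where $\Sa B(K^{*})$ denotes the ball structure of $K^{*}$; hence $\Sh B$ makes definable, in particular, the trace on $\B(K)^{n}$ of any $K^{*}$-definable subset of $\B(K^{*})^{n}$ that uses parameters only from $\B(K^{*})$. We use freely that the following are $\emptyset$-definable in $\Sa B$: the refinement order $b\sub b'$; the relation $\rad(b)\le\rad(b')$ and equality of radii; the partial function taking $(b,b')$ with $\rad(b')\le\rad(b)$ to the unique $c\supseteq b$ with $\rad(c)=\rad(b')$; and, on balls of any one fixed radius, $B(a,t)+B(a',t):=B(a+a',t)$ and $B(a,t)\cdot B(a',t):=B(aa',t)$, the product being well defined because every centre of a ball contained in $V$ already lies in $V$.

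In a sufficiently saturated $K^{*}\succ K$ fix $\delta\in\Gamma_{\ge 0}(K^{*})$ realizing the cut of $\Gamma(K)$ lying just above the standard integers: $n<\delta$ for every standard $n$, and $\delta<\gamma$ for every $\gamma\in\Gamma(K)$ that exceeds all standard integers. This cut is consistent but is not realized in $K$, which is exactly where the Shelah expansion is used. Pick $\beta_{1}\in\B(K^{*})$ with $\rad(\beta_{1})=\delta$. Then
\[
\B^{\infty}:=\{\,b\in\B(K):\rad(b)>\rad(\beta_{1})\,\}
\]
is precisely the set of balls of $K$ of infinite (nonstandard) radius, and it, together with the relation
\[
b\approx b'\quad:\Longleftrightarrow\quad\forall c\in\B(K)\,\bigl(\rad(c)\le\rad(\beta_{1})\ \rightarrow\ (b\sub c\leftrightarrow b'\sub c)\bigr),
\]
is $\Sh B$-definable. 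On $\B^{\infty}$ the classes of $\approx$ are exactly the fibres of the map sending a ball to the coset of any of its centres modulo the ideal $\mfrak_{\infty}:=\bigcap_{n\in\N}\mfrak^{n}$ of $V(K)$ (the elements of $V(K)$ of valuation exceeding every standard integer), since two balls of infinite radius lie in the same balls of standard radius exactly when their centres agree modulo $\mfrak_{\infty}$; and as $\Gamma(K)$ has positive infinite elements the map is onto, so $\B^{\infty}/\!\approx$ is in natural bijection with $V(K)/\mfrak_{\infty}$.

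To carry over the ring structure, given $b,b'\in\B^{\infty}$ lower the one of larger radius to its ancestor at the radius of the other, then add, respectively multiply, the two resulting balls of equal (still infinite) radius; call the results $b\oplus b'$ and $b\odot b'$. These are $\Sh B$-definable, and because $\mfrak_{\infty}$ is an ideal they respect $\approx$ and, under the bijection above, become $+$ and $\times$ on $V(K)/\mfrak_{\infty}$. Finally $V(K)/\mfrak_{\infty}=\varprojlim_{n\in\N}V(K)/\mfrak^{n}$, where the finite rings $V(K)/\mfrak^{n}$ and their projection maps coincide, via $K\equiv\Q_{p}$, with the standard system $(\Z/p^{n}\Z)_{n}$; the natural map $V(K)\to\varprojlim_{n}\Z/p^{n}\Z$ is injective since $\bigcap_{n}\mfrak^{n}=\mfrak_{\infty}$, and surjective by $\aleph_{1}$-saturation of $K$ (an $\N$-indexed descending chain of balls in $V$, which we may take with centres in $\Q$ so that the relevant type is countable, has nonempty intersection). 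Hence $(\B^{\infty}/\!\approx,\oplus,\odot)\cong(\Z_{p},+,\times)$ is interpreted in $\Sh B$, and passing to the fraction field interprets $\Q_{p}$.

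The steps that need real care are the two verifications in the previous paragraph --- that $\oplus$ and $\odot$ descend to $\approx$ and agree with $+$ and $\times$ on $V(K)/\mfrak_{\infty}$, which comes down to the fact that lowering a ball to a smaller radius leaves its centre unchanged, so that adding or multiplying representatives computes $x+x'$, respectively $xx'$, modulo $\mfrak_{\infty}$, the ideal property of $\mfrak_{\infty}$ being what makes multiplication (not just addition) descend --- and the surjectivity of $V(K)\to\Z_{p}$, for which one must keep the approximating chain of balls countable so that $\aleph_{1}$-saturation applies. Everything else reduces to observing that the relations used are $K$-definable on the imaginary sort $\B$, which is routine. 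The single idea driving the proof is that one external parameter --- a ball whose radius sits just above the standard cut --- does double duty: it makes ``infinite radius'' definable, and it lets a single ball of infinite radius name a single element of the inverse limit $\Z_{p}$.
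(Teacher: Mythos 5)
Your proof is correct, and the core construction is the same as the paper's: take balls of (nonstandard) infinite radius, quotient by the relation ``contained in exactly the same balls of finite radius'' (externally definable because $\N$ is a convex subset of $\Gamma$, equivalently via a single external parameter realizing the cut at $\N$), and identify the quotient with $\Z_p$ via the standard part of centres, i.e.\ with $V/\mathfrak{m}_W$ where $\mathfrak{m}_W$ is your $\mathfrak{m}_\infty$.

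The execution does differ, in a way worth noting. The paper fixes an \emph{internal} $\gamma\in\Gamma(K)$ with $\gamma>\N$ and works only with $E=\{B:\rad(B)=\gamma\}$; it then shows directly that the $f$-preimage of every $\Q_p$-definable $X\subseteq\Z_p^n$ is $\Sh B$-definable, reducing to closed $X$ and invoking a lemma about standard parts of closed definable sets (the paper's Lemma~\ref{lem:st}). You instead take all balls of infinite radius and transport the ring operations by hand: the ``ancestor at a prescribed radius'' function and coordinatewise addition/multiplication on balls of a common radius are $\Sa B$-definable, and you verify they descend to the $\approx$-quotient and match $+,\times$ on $V/\mathfrak{m}_\infty$ because $\mathfrak{m}_\infty$ is an ideal of $V$. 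This buys you two things: you avoid the closed-set/standard-part lemma entirely, and the operations $\oplus,\odot$ are $\Sa B$-definable on all of $\B$, so the only genuinely external ingredients are the domain $\B^\infty$ and the relation $\approx$. The paper's route, by contrast, makes it explicit that the \emph{full} induced $\Q_p$-structure on $\Z_p$ is recovered as preimages (which for the purpose of ``interprets $\Q_p$'' is equivalent, since the pure ring structure on $\Q_p$ generates everything). Your saturation argument for surjectivity of $V\to\Z_p$, keeping the approximating centres in $\Q$ so the type is countable, matches the paper's implicit use of saturation in defining $\st$; the rest of the verifications you flag as needing care are exactly the ones that do, and they go through.
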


\noindent
Conjecture~\ref{conj:obv} is a well-known and well-believed folklore conjecture.
Conjecture~\ref{conj:obv} is beyond the reach of current techniques, but its failure would be a huge surprise.
The analogue of Conjecture~\ref{conj:obv} for $\mathrm{ACVF}$ is a result of Hrushovski and Rideau~\cite{metastable}.

\begin{conj}
\label{conj:obv}
Any infinite field interpretable in $K$ is $K$-definably isomorphic to some finite extension of $K$.
\end{conj}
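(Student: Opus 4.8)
The statement is a famous open problem; the only comparable result is the $\mathrm{ACVF}$ case due to Hrushovski and Rideau~\cite{metastable}, so the natural plan is to try to transpose their argument to the $p$-adic setting. The starting point is elimination of imaginaries: $\mathrm{Th}(K)=\mathrm{Th}(\Q_p)$ eliminates imaginaries after adjoining the $p$-adic geometric sorts --- the field sort $K$, the lattice sorts $S_n = \mathrm{GL}_n(K)/\mathrm{GL}_n(V)$, the value group $\Gamma$, and the associated torsor sorts --- by the theorem of Hrushovski, Martin and Rideau. Hence any interpretable infinite field $F$ is in definable bijection with a definable subset of a finite product of these sorts, and the problem becomes: show that no genuinely new infinite field can be assembled definably out of them.

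First I would dispose of the ordered part: $\Gamma$ is a model of Presburger arithmetic and interprets no infinite field, so the field structure on $F$ must make essential use of $K$ or of some $S_n$. Next I would exploit a feature unavailable in $\mathrm{ACVF}$: the residue field of $K$ is finite, so the fibres of each $S_n \to \Gamma$ are finite-dimensional vector spaces over a finite field, leaving no infinite stably dominated part to analyse. The aim is to use this to run a descent showing that, up to this finite-residue-field noise, every interpretable structure is controlled by the field sort $K$ together with the orthogonal Presburger sort $\Gamma$; since $\Gamma$ is orthogonal to $K$ and carries no field, $F$ would then be definably isomorphic to an infinite field definable --- in the field language, not merely interpretable --- in $K$. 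The final step would be a $p$-adic field theorem: an infinite field definable in a $p$-adically closed field is definably isomorphic to a finite extension of the field, which should be extractable from Pillay's analysis of definable groups in $p$-adically closed fields (definable groups are, up to finite data, $K$-points of algebraic groups over $K$), applied to $F$ viewed as a definable additive group acted on by $F^{\times}$, in the style of the classical arguments recovering a field from its definable additive and multiplicative structure.

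The main obstacle is the descent step. In the $\mathrm{ACVF}$ proof this is exactly where metastability and stable domination do the work: one reduces to the stably dominated sorts and the orthogonal value group and treats each separately. But $\Q_p$ is not metastable in this sense --- its value group $\Z$ is not stably dominated --- and there is currently no adequate substitute. A genuine proof would probably need either a new structure theory for interpretable sets in $\mathrm{Th}(\Q_p)$, perhaps using distality and the finiteness of the residue field to show that all interpretable geometries are controlled by $K$ and $\Gamma$, or a direct group-configuration argument manufacturing an algebraic group over $K$ out of the hypothetical field $F$. This is precisely why the conjecture, though universally believed, remains out of reach, and why the present paper states it rather than proving it.
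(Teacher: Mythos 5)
You have not produced a proof, and you say so yourself; but it is important to note that the paper does not prove this statement either. It is stated as Conjecture~\ref{conj:obv}, described explicitly as a folklore conjecture ``beyond the reach of current techniques,'' and is only ever \emph{assumed} (in Theorem~\ref{thm:1} and Proposition~\ref{prop:geometric-sorts}). So there is no proof in the paper to compare your attempt against, and your text should not be read as filling a gap the paper left open on purpose.

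As a research sketch your proposal is sensible and correctly locates the difficulty. The reduction via elimination of imaginaries to the $p$-adic geometric sorts (Hrushovski--Martin--Rideau, cited in the paper as \cite{Hrushovski2018}) is the natural first move, and your diagnosis that the missing ingredient is a substitute for the metastability/stable-domination machinery of \cite{metastable} --- unavailable here because the residue field is finite and the value group $\Gamma$ (Presburger) is not stably dominated --- matches the standard understanding of why the $\mathrm{ACVF}$ argument does not transfer. Two cautions on the details you do assert: the claim that an infinite field \emph{definable} (in the field sorts) in a $p$-adically closed field is definably isomorphic to a finite extension is itself a nontrivial theorem (Pillay's work on groups and fields definable in $\Q_p$), so if you invoke it you should cite it rather than re-derive it informally; and the ``descent'' step you describe --- showing every interpretable field is controlled by $K$ together with the orthogonal sort $\Gamma$ --- is not a step with a known argument, it \emph{is} the conjecture in disguise, so your outline should not give the impression that only routine work remains. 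In short: your assessment that the statement is open is the correct one, and nothing in your proposal (or in the paper) constitutes a proof of it.
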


\noindent
The proof of Theorem~\ref{thm:1} is easy.

\begin{thm}
\label{thm:1}
If Conjecture~\ref{conj:obv} holds then $\Sa B$ does not interpret an infinite field.
\end{thm}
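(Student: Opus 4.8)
The plan is to argue by contradiction and to use Conjecture~\ref{conj:obv} to reduce the problem to ruling out one very concrete family of fields. Suppose $\Sa B$ interprets an infinite field $F$. By construction $\Sa B$ is the structure induced on the $\emptyset$-interpretable set $\B\subseteq K^{\mathrm{eq}}$, hence $\Sa B$ is interpretable in $K$; composing interpretations, $F$ is interpretable in $K$. Conjecture~\ref{conj:obv} then produces a finite extension $L$ of $K$ with $F$ $K$-definably isomorphic to $L$, so that $\Sa B$ interprets the infinite $p$-adically closed field $L$. It remains to show this is impossible; note that $L$ has value group a discretely ordered $\Z$-group and finite residue field, and in particular is not algebraically closed.

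The next step is to describe $\Sa B$ explicitly enough to see why it is too weak to interpret such an $L$. Every $\Sa B$-definable set is the restriction to a power of $\B$ of a $K$-definable set, and $\B$ is coordinatized inside $K^{\mathrm{eq}}$ by three pieces of data: the tree (containment) relation on balls; the value group $\Gamma$, through the $K$-definable radius map $\B\to\Gamma_{\ge 0}$, by means of which $\Sa B$ recovers the full Presburger structure on $\Gamma$ (including congruences, so that $\Sa B$ is \emph{not} literally $C$-minimal); and the residue field $\F_p$, through the maps recording, for a ball and the ball immediately above it, which of the $\F_p$-many immediate subballs one has. Concretely a ball of radius $t\in\Gamma_{\ge 0}$ is recovered from $t$ together with a point of the ``digit'' quotient $V/p^tV$. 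I would then verify that $\B$ is stably embedded in $K$ and that no $K$-definable relation on $\B$ goes beyond the tree, $\Gamma$, $\F_p$, and their obvious interactions (Minkowski sums and products of balls, and so on); the upshot is that $\Sa B$ is a reduct of — or is bi-interpretable with — a tree with $\F_p$-branching over a Presburger ``depth'', essentially the leading-term structure of $\Q_p$ together with its truncations.

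The contradiction should then come from the fact that a structure of this shape is ``linear'': it interprets no infinite field other than possibly an algebraically closed one, by the discretely valued analogue of the Haskell--Macpherson classification of fields in $C$-minimal structures (the corresponding statement on the $\mathrm{ACVF}$ side underlies the Hrushovski--Rideau result~\cite{metastable} quoted above). Since $L$ is an infinite field that is \emph{not} algebraically closed, this contradicts the previous step. This is exactly the point at which the conclusion of Conjecture~\ref{conj:obv} is genuinely used: it is what lets us assume that the field interpreted in $\Sa B$ is a finite extension of $K$, hence not algebraically closed.

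The main obstacle is this last step: making precise and proving the ``linearity'' of $\Sa B$ — both that the induced structure on $\B$ really does not exceed the tree-plus-$\Gamma$-plus-$\F_p$ coordinatization (stable embeddedness of $\B$ is the input here), and that a structure so coordinatized interprets no infinite non-algebraically-closed field. Everything else — the two applications of transitivity of interpretation and the appeal to Conjecture~\ref{conj:obv} — is immediate, which is why the proof of Theorem~\ref{thm:1} is easy given the conjecture.
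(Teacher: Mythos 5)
Your opening reductions are fine and match the paper: if $\Sa B$ interprets an infinite field $F$, then $F$ is interpretable in $K$, and Conjecture~\ref{conj:obv} yields a $K$-definable isomorphism between $F$ and a finite extension $L$ of $K$, hence a $K$-definable bijection $F \to K^m$ for some $m$. But from that point your proof diverges from the paper's and runs into a genuine gap. You propose to rule out $L$ by a structure theorem for $\Sa B$: coordinatize $\B$ by a tree over a Presburger sort with $\F_p$-branching, show $\B$ is stably embedded with exactly this induced structure, and invoke a discretely-valued analogue of the Haskell--Macpherson classification of fields in $C$-minimal structures to conclude that no non-algebraically-closed infinite field is interpretable. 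None of these steps is carried out, and they are far from routine --- in particular no such classification theorem is available for this setting, the stable-embeddedness-with-explicit-induced-structure claim is nontrivial, and you yourself flag this as ``the main obstacle.'' As written, the proof is incomplete.

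The paper's actual argument sidesteps all of that. The key is Lemma~\ref{lem:map}: \emph{any} $K$-definable function $\B \to K^m$ has finite image. This is proved by a cardinality transfer: reduce to $m=1$ via coordinate projections, recall that an infinite $K$-definable subset of $V$ has nonempty interior, so it suffices to show the image has empty interior; this statement is first-order over $\Q_p$, and over $\Q_p$ the set $\B(\Q_p)$ of balls in $\Z_p$ is countable while every nonempty open subset of $\Z_p$ is uncountable, so any function $\B(\Q_p)\to\Q_p$ has image with empty interior. Given the lemma, Theorem~\ref{thm:1} is immediate: an interpretation of $F$ in $\Sa B$ gives a $K$-definable surjection $\B^k \to F$, and composing with the $K$-definable bijection $F \to K^m$ provided by the conjecture gives a $K$-definable surjection $\B^k \to K^m$, contradicting the lemma. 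So the proof does not need to know anything about the internal ``shape'' of $\Sa B$, only that it lives on a sort which is too small (in the definable/cardinality sense witnessed over $\Q_p$) to surject onto $K^m$. The conjecture's role is precisely to replace the abstract field by a concrete subset of $K^m$ so that this counting obstruction applies. You should replace your unproved linearity step with this lemma.
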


\noindent
Suppose that $\Sa O$ is a structure, $A$ is a subset of $M^m$, and $\tau : O \to A$ is a bijection.
We say that $\Sa M$ trace defines $\Sa O$ via $\tau$ if for every $\Sa O$-definable $X \sub O^n$ there is an $\Sa M$-definable $Y \subseteq M^{mn}$ such that 
$$ X = \{ (a_1,\ldots,a_n) \in O^n : (\tau(a_1),\ldots,\tau(a_n)) \in Y \}, $$
and $\Sa M$ \textbf{trace defines} $\Sa O$ if $\Sa M$ trace defines $\Sa O$ via some injection $\tau : O \to M^m$.

\begin{thm}
\label{thm:trace-field}
$\Sa B$ trace defines $\Q_p$.
\end{thm}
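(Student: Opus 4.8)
\textit{Approach.} The plan is to realize $\Z_p$, up to trace equivalence, inside a $\Sa B$-definable ring quotient of $V$, and then to pass from $\Z_p$ to $\Q_p$. Since $\Z_p$ is definable in the field $\Q_p$ (it is the valuation ring) and $\Q_p$ is interpretable in the ring $\Z_p$ (as its fraction field), it suffices to show that $\Sa B$ trace defines the ring $(\Z_p,+,\times)$: trace definability is transitive and is implied by interpretability.

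\textit{A $\Sa B$-definable ring.} First I would fix $t\in\Gamma$ larger than every element of $\Z\subseteq\Gamma$ --- possible by saturation of $K$ --- and consider the set $R_t:=\{\,b\in\B:\rad(b)=t\,\}=\{\,B(a,t):a\in V\,\}$ of balls of radius $t$, which I identify with $V/B(0,t)$. As $\rad$ is $K$-definable and any fixed radius-$t$ ball codes $t$, the set $R_t$ is $\Sa B$-definable. For balls with centres in $V$ the operations $B(a,t)\oplus B(a',t):=B(a+a',t)$ and $B(a,t)\otimes B(a',t):=B(aa',t)$ are well defined --- this uses $\valp(a),\valp(a')\geq 0$ --- and are traces of $K^{\mathrm{eq}}$-definable relations, so $(R_t,\oplus,\otimes)$ is a $\Sa B$-definable commutative unital ring, canonically the quotient ring $V/B(0,t)$. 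A nonzero element of $\Z_p$ has valuation a natural number, hence less than $t$, so it survives in this quotient; thus the quotient map $q\colon V\to R_t$ restricts to an \emph{injective} ring homomorphism $j\colon\Z_p\hookrightarrow R_t$. I then claim that $\Sa B$ --- already the ring $(R_t,\oplus,\otimes)$ --- trace defines $(\Z_p,+,\times)$ via $j$, which with the reduction above finishes the proof.

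\textit{Transferring definable sets.} To prove the claim I would use Macintyre's quantifier elimination: every $\Z_p$-definable $X\subseteq\Z_p^n$ is a finite Boolean combination of sets $\{\bar a:f(\bar a)=0\}$ and $\{\bar a:P_m(g(\bar a))\}$, where $f,g$ are polynomials which, after clearing denominators and multiplying by $m$-th powers of $p$, may be taken over $\Z_p$, and $P_m$ is the predicate ``is an $m$-th power''. Boolean combinations are harmless for trace definability via a fixed map, so only the two basic shapes matter. A polynomial equation transfers for free: since $j$ is an injective ring homomorphism, $f(\bar a)=0$ in $\Z_p$ iff $j(f(\bar a))=0$ iff $f(j(\bar a))=0$ in $R_t$, so one takes $Z=\{\bar r\in R_t^n:f(\bar r)=0\}$. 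For $P_m(g(\bar x))$ one takes $Z=\{\bar r:\exists\rho\in R_t\ \rho^m=g(\bar r)\}$, and it remains to check that for $b\in\Z_p$ one has $b\in P_m(\Z_p)$ iff $j(b)$ is an $m$-th power in $R_t$. One direction is trivial. For the other, lift an $m$-th root of $j(b)$ to $c\in V$, so $\valp(c^m-b)\geq t$; the case $b=0$ is trivial, and otherwise $\valp(b)<t$ forces $m\mid\valp(b)$, and writing $b=p^{\valp(b)}b_0$, $c=p^{\valp(b)/m}u$ with $b_0\in\Z_p^\times$, $u\in V^\times$ gives $\valp(u^m-b_0)$ infinite. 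Now $(\Z_p^\times)^m$ is clopen of finite index in $\Z_p^\times$, with coset representatives $1=\varepsilon_1,\dots,\varepsilon_k\in\Z_p^\times$ and with $1+p^{N_0}\Z_p\subseteq(\Z_p^\times)^m$ for some $N_0\in\N$; by $\Q_p\preceq K$ the same data works for $V^\times$. Since $\valp(u^m-b_0)\geq N_0$, one gets $b_0u^{-m}\in 1+p^{N_0}V\subseteq(V^\times)^m$, so $b_0\in(V^\times)^m$; as the representatives lying in $(V^\times)^m$ are exactly those lying in $(\Z_p^\times)^m$ --- again by $\Q_p\preceq K$ --- and these cosets are distinct, this forces $b_0\in(\Z_p^\times)^m$, whence $b=(p^{\valp(b)/m})^m b_0\in P_m(\Z_p)$.

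\textit{Main obstacle.} The crux is this last transfer of the Macintyre predicates along $j$. Ring homomorphisms carry polynomial equations across automatically, but membership in $n$-th powers is controlled by finitely much data --- a fixed power of $p$ together with the valuation modulo $n$ --- and the whole point of working with the quotient $V/B(0,t)$ at an \emph{infinite} radius $t$, and of invoking $\Q_p\preceq K$, is to keep that data standard so that the transfer goes through. The remaining ingredients --- $\Sa B$-definability of the ring $R_t$, the bookkeeping for Boolean combinations, and the final passage from $\Z_p$ to $\Q_p$ --- are routine.
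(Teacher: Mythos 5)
Your proof is correct, and it takes a genuinely different route from the paper's. The paper derives Theorem~\ref{thm:trace-field} formally from Theorem~\ref{thm:2} (the Shelah completion $\Sh B$ interprets $\Q_p$, via the standard part map and the externally definable equivalence relation $\approx$ on $E$) together with the general Theorem~\ref{thm:field-ext} (a $\lambda$-saturated $\nip$ structure trace defines any small structure interpretable in its Shelah completion, proved via the Chernikov--Simon honest-definitions theorem). You instead trace define $(\Z_p,+,\times)$ directly in $\Sa B$ via the explicit embedding $j\colon a\mapsto B(a,t)$ into the $\Sa B$-definable quotient ring $R_t\cong V/B(0,t)$, using Macintyre quantifier elimination to reduce to polynomial equations (which transfer automatically along a ring embedding) and the power predicates $P_m$ (which transfer because $\Z_p^\times/(\Z_p^\times)^m$ is finite, $1+p^{N_0}\Z_p\subseteq(\Z_p^\times)^m$, and these facts pass to $K$ by elementarity); the final passage from $\Z_p$ to $\Q_p$ is the fraction-field interpretation plus transitivity. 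Your argument is more elementary and entirely bypasses the theory of Shelah completions and $\nip$-specific machinery, and it produces an explicit trace map with $m=1$. What it does not give you is the stronger intermediate Theorem~\ref{thm:2} (interpretability, not just trace definability, in $\Sh B$) or the reusable general Theorem~\ref{thm:field-ext}, both of which are of independent interest in the paper. The one point you might make more explicit is that the $\Sa B$-definability of the ring structure on $R_t$ uses a parameter (a fixed ball of radius $t$) to pin down $t$, but this is standard and you gesture at it correctly.
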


\noindent
Theorem~\ref{thm:trace-field} follows from Theorem~\ref{thm:2} and Theorem~\ref{thm:field-ext}.

\begin{thm}
\label{thm:field-ext}
Let $\lambda$ be a cardinal.
Suppose $\Sa M$ is $\lambda$-saturated and $\nip$.
If $\Sa O$ is interpretable in the Shelah completion of $\Sa M$ and $|O| < \lambda$ then $\Sa M$ trace defines $\Sa O$.
\end{thm}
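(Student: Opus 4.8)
The plan is to reduce the statement, via Shelah's theorem, to a short saturation argument. Write $\Sa M^{\mathrm{Sh}}$ for the Shelah completion of $\Sa M$ and recall the fact where $\nip$ enters: by Shelah's theorem every $\Sa M^{\mathrm{Sh}}$-definable subset of $M^{l}$ is \emph{externally definable} in $\Sa M$, i.e.\ of the form $\phi(M^{l},b)$ for an $L$-formula $\phi$ and a finite tuple $b$ in some elementary extension $\Sa N\succ\Sa M$.

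The first step is to isolate a Key Lemma that uses only $\lambda$-saturation: if $X\subseteq M^{l}$ is externally definable and $A\subseteq M^{l}$ has $|A|<\lambda$, then some $\Sa M$-definable $Y$ satisfies $Y\cap A=X\cap A$. To prove it, write $X=\phi(M^{l},b)$ with $b\in\Sa N\succ\Sa M$ and consider the type $p(y)$ over $A$ with members $\phi(a;y)$ for $a\in A\cap X$ and $\neg\phi(a;y)$ for $a\in A\setminus X$; it is realized by $b$, hence consistent with the elementary diagram of $\Sa M$, and since $|A|<\lambda$ it is realized by some $d\in M$, whence $Y:=\phi(M^{l};d)$ works.

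The second step assembles the trace definition. Fix an interpretation of $\Sa O$ in $\Sa M^{\mathrm{Sh}}$: a surjection $q\colon D\to O$ with $D\subseteq M^{k}$ an $\Sa M^{\mathrm{Sh}}$-definable set such that $(q^{n})^{-1}(X)$ is $\Sa M^{\mathrm{Sh}}$-definable for every $\Sa O$-definable $X\subseteq O^{n}$. Choose a transversal $o\mapsto d_{o}$ (so $q(d_{o})=o$) and let $\tau\colon O\to M^{k}$, $\tau(o)=d_{o}$; then $\tau$ is injective with $|\operatorname{im}\tau|=|O|<\lambda$. Given an $\Sa O$-definable $X\subseteq O^{n}$, put $\widetilde X:=(q^{n})^{-1}(X)\subseteq D^{n}\subseteq M^{kn}$ and $A:=\{(\tau(o_{1}),\dots,\tau(o_{n})):o_{i}\in O\}$, so $|A|\le|O|<\lambda$. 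Since $\widetilde X$ is $\Sa M^{\mathrm{Sh}}$-definable it is externally definable, so the Key Lemma produces an $\Sa M$-definable $Y\subseteq M^{kn}$ with $Y\cap A=\widetilde X\cap A$; as $q^{n}(\tau(o_{1}),\dots,\tau(o_{n}))=(o_{1},\dots,o_{n})$, for all $o_{1},\dots,o_{n}\in O$
\[
(o_{1},\dots,o_{n})\in X\iff(\tau(o_{1}),\dots,\tau(o_{n}))\in\widetilde X\iff(\tau(o_{1}),\dots,\tau(o_{n}))\in Y,
\]
which is precisely the defining condition for trace definition of $X$ via $\tau$. Since $\tau$ does not depend on $X$, this shows $\Sa M$ trace defines $\Sa O$ via $\tau$.

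I do not expect a genuinely hard step here: once Shelah's theorem is invoked, everything is elementary, so the real task is to organize the argument correctly. The temptation to avoid is trying to produce a single parameter $b\in M$ that defines the entire induced structure on $\operatorname{im}\tau$ at once — that would interpret $\Sa O$ inside $\Sa M$, which is false in general (cf.\ Theorem~\ref{thm:1}). The whole force of trace definition is that $\tau$ is fixed while the witnessing $\Sa M$-definable sets are chosen one $\Sa O$-definable set at a time, so Shelah's theorem and the Key Lemma are only ever applied to a single externally definable set; in particular no cardinality hypothesis on the language of $\Sa O$ is needed.
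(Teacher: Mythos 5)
Your proof is correct, and it takes a genuinely different route through the key step. The paper reduces the theorem to Proposition~\ref{prop:trace-new}, whose proof invokes the Chernikov--Simon honest-definitions theorem (Fact~\ref{fact:cs}): given the externally definable $Y$, one extracts an $\Sa M$-definable family $(Z_b)$ with $B\subseteq Z_b\subseteq Y$ for every finite $B\subseteq Y$, and then uses $\lambda$-saturation to find a single $b$ with $X=O^n\cap Z_b$. Your Key Lemma bypasses honest definitions entirely: once Shelah's theorem (Fact~\ref{fact:shelah}) has converted $\Sh M$-definability into external definability $Y=\phi(M^{mn},b)$ with $b\in\Sa N\succ\Sa M$, you simply realize by $\lambda$-saturation the (finitely satisfiable in $\Sa M$, since $b$ witnesses it in $\Sa N$) type over the small set $A$ that records which tuples of $A$ lie in $Y$, obtaining an internal parameter $d\in M$ with $\phi(M^{mn},d)\cap A=Y\cap A$. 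Thus your argument uses NIP only through Fact~\ref{fact:shelah}, while the Key Lemma itself is a pure saturation argument needing no NIP; the paper's version also needs the stronger Fact~\ref{fact:cs}. This is a cleaner decomposition: what each approach buys is the same conclusion, but yours makes explicit that the passage from ``externally definable'' to ``trace definable on a small set'' is elementary, with NIP entering only to control $\Sh M$-definable sets. Your closing remark about why one cannot hope for a single uniform parameter is also exactly right and worth keeping in mind.
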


\section{$\nip$-theoretic background}
\subsection{Shelah completeness}
Let $\Sa M$ be a structure and $\Sa M \prec \Sa N$ be $|M|^+$-saturated.
A subset $X$ of $M^n$ is \textbf{externally definable} if $X = M^n \cap Y$ for some $\Sa N$-definable subset $Y$ of $N^n$.
An application of saturation shows that the collection of externally definable sets does not depend on choice of $\Sa N$.
Fact~\ref{fact:convex} is well-known and easy.

\begin{fact}
\label{fact:convex}
Suppose that $X$ is an $\Sa M$-definable set and $<$ is an $\Sa M$-definable linear order on $X$.
Then any $<$-convex subset of $X$ is externally definable.
\end{fact}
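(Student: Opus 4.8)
The plan is to show that any $<$-convex $C \subseteq X$ is externally definable by pinching it between two elements of a saturated extension. Fix $X \subseteq M^n$, fix an $|M|^+$-saturated $\Sa N \succ \Sa M$ (by the remark preceding the statement the choice of $\Sa N$ does not matter), and write $X(\Sa N)$ for the subset of $N^n$ defined in $\Sa N$ by a formula over $M$ defining $X$; recall $X(\Sa N) \cap M^n = X$. Let $C \subseteq X$ be $<$-convex; we may assume $C \neq \emptyset$, as $\emptyset$ is $\Sa M$-definable. By convexity $X$ is the disjoint union of
$$ D_- := \{ a \in X : a < c \text{ for all } c \in C \}, \qquad C, \qquad D_+ := \{ a \in X : a > c \text{ for all } c \in C \}. $$
I will find $a, b$ in $X(\Sa N)$ that pinch $C$ from just below and just above, and set $Y := \{ y \in X(\Sa N) : a \le y \le b \}$, which is $\Sa N$-definable.

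To get $a$, I would realize the partial type over $M$
$$ p(y) := \{\, y \in X \,\} \cup \{\, d < y : d \in D_- \,\} \cup \{\, y \le c : c \in C \,\}. $$
Every finite $p_0 \subseteq p$ is satisfied in $\Sa M$: after enlarging $p_0$ we may assume it contains an instance $y \le c$ with $c \in C$, and then $\min C_0$, where $C_0 \subseteq C$ is the finite nonempty set of parameters occurring in such instances, satisfies $p_0$, since $\min C_0 \in C$ lies $\le$ every element of $C_0$ and strictly above every element of $D_-$. As $p$ has at most $|M|$ parameters, saturation gives $a \in X(\Sa N)$ realizing $p$. Symmetrically, the finitely satisfiable (via $\max C_0$) partial type
$$ q(y) := \{\, y \in X \,\} \cup \{\, y < d : d \in D_+ \,\} \cup \{\, c \le y : c \in C \,\} $$
is realized by some $b \in X(\Sa N)$.

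Finally I would check $Y \cap M^n = C$. Let $m \in X$. If $m \in C$, then $a \le m$ and $m \le b$ hold, being instances realized by $a$ and $b$, so $m \in Y$. If $m \in D_-$, then $m < a$ is realized by $a$, so $m \notin Y$; if $m \in D_+$, then $b < m$, so $m \notin Y$. Since $\Sa N \succ \Sa M$, for tuples from $M^n$ the relation $<$ in $\Sa N$ agrees with the one in $\Sa M$, so these comparisons are unambiguous, and $C = Y \cap M^n$ is externally definable.

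No hypothesis beyond definability of the order $<$ and elementarity is used; in particular $\nip$ plays no role. I expect essentially no genuine obstacle here, the one point warranting care being the choice of strict versus non-strict inequalities in $p$ and $q$: non-strict on the $C$-side, so that an element of $C$ which happens to be $<$-least or $<$-greatest in $C$ is still captured by $a \le y \le b$, and strict on the $D_\pm$-side, so that the pinching elements genuinely separate $C$ from the rest of $X$. Once that is set up correctly, the consistency computation above absorbs all degenerate cases ($D_-$ or $D_+$ empty, $C$ with or without endpoints in $M$).
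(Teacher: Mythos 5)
Your proof is correct: the paper states this fact without proof (calling it ``well-known and easy''), and your argument --- realizing the lower and upper cut types of the convex set in an $|M|^+$-saturated elementary extension and cutting out the interval $a \le y \le b$ inside $X(\Sa N)$ --- is exactly the standard argument the paper has in mind, with the strict/non-strict inequalities placed correctly and the degenerate cases ($C=\emptyset$, empty $D_\pm$) handled. No use of $\nip$ is needed, as you note, so nothing is missing.
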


\noindent
Fact~\ref{fact:cs} is arguably the most important result on externally definable sets at present.
It is a theorem of Chernikov and Simon~\cite{CS-II}.
The right to left implication is a saturation exercise which does not require $\nip$.

\begin{fact}
\label{fact:cs}
Suppose that $\Sa M$ is $\nip$ and $X$ is a subset of $M^n$.
Then $X$ is externally definable if and only if there is an $\Sa M$-definable family $(X_a : a \in M^m)$ of subsets of $M^n$ such that for every finite $A \subseteq X$ we have $A \subseteq X_a \subseteq X$ for some $a \in M^m$.
\end{fact}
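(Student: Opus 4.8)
The plan is to prove the two directions separately. The right‑to‑left implication uses neither $\nip$ nor saturation of $\Sa M$; the left‑to‑right implication is the Chernikov--Simon theorem on honest definitions, and all the work is there.

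\emph{Right to left.} Suppose $X_a = \theta(M^n,a)$ for an $\Sa M$-formula $\theta(x,y)$, and every finite $A \subseteq X$ is contained in some $X_a \subseteq X$. Fix $\Sa M \prec \Sa N$ that is $|M|^+$-saturated and let $p(y)$ be the partial type over $M$ consisting of $\theta(v,y)$ for $v \in X$ together with $\neg\theta(w,y)$ for $w \in M^n \setminus X$. Any realisation $a^\ast \in N^m$ of $p$ satisfies $\theta(N^n,a^\ast) \cap M^n = X$, so $X$ is externally definable; and $p$ is finitely satisfiable in $\Sa M$ because a finite fragment involves a finite $A \subseteq X$ and a finite $A' \subseteq M^n \setminus X$, and any $a$ with $A \subseteq X_a \subseteq X$ realises it.

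\emph{Left to right.} Fix $\phi(x,z)$ and, in a $|M|^+$-saturated $\Sa N \succ \Sa M$, a tuple $b$ with $X = \phi(M^n,b)$. The target is a formula $\psi(x,y)$, with $|y|$ depending only on $\phi$, such that every finite $A \subseteq X$ is contained in $\psi(M^n,a) \subseteq X$ for some $a \in M^{|y|}$; then $(\psi(M^n,a))_{a}$ is the family asked for. It suffices to do this ``up to bounded disjunction'': if there is a formula $\psi_0$ and an integer $t$, both depending only on $\phi$, so that every finite $A \subseteq X$ is covered by $t$ sets $\psi_0(M^n,a_1),\dots,\psi_0(M^n,a_t)$ each contained in $X$, then $\psi := \psi_0(x,y_1)\vee\dots\vee\psi_0(x,y_t)$ works. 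For $\psi_0$ I would use an ``honest definition'' formula attached to $\phi$ --- concretely a plurality/majority‑vote formula, where $x \in \psi_0(M^n,\bar u\,\bar z)$ records that $x$ has the same $\phi(\cdot,z_i)$-behaviour as most of the sample points $u_1,\dots,u_r$. The reason for this shape is that the sample $\bar u$ can be chosen among points of $X$ (hence inside $M^n$) and the test tuple $\bar z$ inside $M^m$, so the resulting set is cut out by $\Sa M$-parameters while still being large and lying inside $\phi(M^n,b)=X$.

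The engine is $\nip$ together with VC theory, and this is also where I expect the real difficulty. Because $\Sa M$ is $\nip$, $\phi$ and every auxiliary formula occurring in $\psi_0$ has finite VC dimension (Sauer--Shelah, i.e.\ polynomially bounded shatter functions), which makes Matou\v sek's $(p,q)$-theorem available. For each $v \in X$ put $S_v := \{a \in M^{|y|} : v \in \psi_0(M^n,a)\ \text{and}\ \psi_0(M^n,a) \subseteq X\}$. Since $S_v$ is a dual $\psi_0$-set intersected with the single constraint ``$\psi_0(M^n,a)\subseteq X$'', the family $(S_v : v \in X)$ has finite VC dimension, say at most $q_0$; and, for a well‑chosen $\psi_0$, an $\varepsilon$-net argument gives the $(p,q_0)$-property for it --- among any $p$ points of $X$ some $q_0$ of them lie in a common $\psi_0(M^n,a)$ with $\psi_0(M^n,a)\subseteq X$ and $a \in M^{|y|}$. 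Applying the $(p,q)$-theorem to the \emph{finite} subsystems $(S_v : v \in A)$ then yields, for every finite $A \subseteq X$, a transversal $a_1,\dots,a_t \in M^{|y|}$ of size $t = t(\phi)$ meeting each $S_v$ with $v \in A$ --- that is, $A \subseteq \psi_0(M^n,a_1)\cup\dots\cup\psi_0(M^n,a_t) \subseteq X$, which is exactly the ``bounded disjunction'' statement. The crux, and the only place compactness alone does not suffice, is confining the witnessing parameters to $M$: in $\Sa N$ the single parameter $b$ trivially sandwiches $X$ inside $\phi(N^n,b)$, but honest \emph{$\Sa M$-definable} approximation of $X$ from inside is precisely what $\nip$ buys --- equivalently, it is uniform definability of types over finite sets (also due to Chernikov--Simon, proved from $\nip$ via the same VC‑theoretic input). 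Making the plurality formula $\psi_0$ precise and verifying its $(p,q_0)$-property with parameters drawn from $A$ and $M$ is the technical heart; the rest is bookkeeping with VC dimension and the $(p,q)$-theorem.
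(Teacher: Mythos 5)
Your right-to-left argument is complete and correct, and it matches the paper's remark that this direction is a saturation exercise not needing $\nip$: the type $p(y)$ consisting of $\theta(v,y)$ for $v \in X$ and $\neg\theta(w,y)$ for $w \in M^n \setminus X$ is finitely satisfiable in $M$ by the sandwiching hypothesis, and any realisation in an $|M|^+$-saturated extension witnesses external definability. Note, however, that the paper does not prove this Fact at all: it is quoted as a theorem of Chernikov and Simon, so the only proof you could be compared against is the one in their paper, and your left-to-right sketch is essentially an outline of that argument (honest definitions via finite VC dimension, uniform definability of types over finite sets, and the $(p,q)$-theorem) rather than an independent proof.

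The genuine gap is in that left-to-right direction, and it sits exactly where you say it does. You never construct the formula $\psi_0$, and the assertion that ``for a well-chosen $\psi_0$, an $\varepsilon$-net argument gives the $(p,q_0)$-property'' for the family $S_v = \{a \in M^{|y|} : v \in \psi_0(M^n,a) \text{ and } \psi_0(M^n,a) \subseteq X\}$ is precisely the content of the theorem: it says that any $p$ points of $X$ admit $q_0$ of them lying in a common \emph{$M$-definable} set contained in $X$, i.e.\ that $X$ can be approximated from inside by $M$-definable sets at all. Nothing in your write-up derives this from $\nip$; the finite VC dimension of $(S_v)_{v \in X}$ (which you do get, since intersecting a dual $\psi_0$-family with the fixed set $\{a : \psi_0(M^n,a)\subseteq X\}$ does not raise VC dimension) only licenses the $(p,q)$-theorem \emph{once} the $(p,q_0)$-property is established, and establishing it is where the Chernikov--Simon machinery (invariant types / UDTFS, or the pair construction of their first paper) is actually used. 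As it stands the ``technical heart'' you flag is not bookkeeping but the theorem itself, so the attempt should be counted as a correct reduction of the easy direction plus a citation-level sketch of the hard one, not a proof.
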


\noindent
We say that a structure is \textbf{Shelah complete} if every externally definable set is already definable.
Note that $\Sa M$ is Shelah complete if and only if all types over $\Sa M$ are definable.
It follows that a theory $T$ is stable if and only if every model of $T$ is Shelah complete.
The Marker-Steinhorn theorem~\cite{Marker-Steinhorn} shows that if $\Sa O$ is an o-minimal expansion of a dense linear order $(O,<)$ then $\Sa O$ is Shelah complete if and only if $(O,<)$ is a complete linear order.
Results from dp-minimality show that $(\Z,+,<)$ is Shelah complete, see~\cite{SW-dp}.
Fact~\ref{fact:delon} is a theorem of Delon~\cite{Delon-def}.

\begin{fact}
\label{fact:delon}
$\Q_p$ is Shelah complete.
\end{fact}

\noindent
The \textbf{Shelah completion} $\Sh M$ of $\Sa M$ is the expansion of $\Sa M$ by all externally definable sets.
Fact~\ref{fact:shelah} is a theorem of Shelah~\cite{Shelah-external} and also a corollary to Fact~\ref{fact:cs}.

\begin{fact}
\label{fact:shelah}
If $\Sa M$ is $\nip$ then every $\Sh M$-definable set is externally definable in $\Sa M$.
\end{fact}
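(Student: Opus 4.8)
The plan is to show that the collection $\mathcal{E}$ of all externally definable subsets of all finite powers $M^n$ is closed under the formation rules for $\Sh M$-definable sets, whence a routine induction on $\Sh M$-formulas gives the statement. Fix an $|M|^+$-saturated $\Sa N \succ \Sa M$; recall that $\mathcal{E}$ does not depend on this choice. The base cases are immediate. An atomic $\Sa M$-formula defines an $\Sa M$-definable set $X$, which equals $M^n$ intersected with the set defined by the same formula in $\Sa N$, hence lies in $\mathcal{E}$. If $Y = M^k \cap Y'$ with $Y'$ definable in $\Sa N$, then the new predicate $P_Y$ applied to a tuple of variables, say $P_Y(x_{i_1},\dots,x_{i_k})$, defines $\{a \in M^n : (a_{i_1},\dots,a_{i_k}) \in Y\} = M^n \cap \{b \in N^n : (b_{i_1},\dots,b_{i_k}) \in Y'\}$, which is in $\mathcal{E}$. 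So it suffices to check that $\mathcal{E}$ is closed under: (a) finite Boolean combinations inside a fixed $M^n$; (b) the operation $X \mapsto X \times M$ and permutations of coordinates; and (c) coordinate projections $M^{n+1} \to M^n$. Granting (a)--(c), the inductive steps for $\neg$, $\wedge$ and $\exists$ follow from (a), from (a) together with (b), and from (b) together with (c), respectively.

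Properties (a) and (b) need nothing beyond the saturation of $\Sa N$: if $X_i = M^n \cap Y_i$ with the $Y_i$ definable in $\Sa N$, then $X_1 \cap X_2 = M^n \cap (Y_1 \cap Y_2)$ and $M^n \setminus X_1 = M^n \cap (N^n \setminus Y_1)$, while $X \times M = M^{n+1} \cap (Y \times N)$, and permuting the coordinates of $Y$ permutes those of $X$.

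The content is in (c), which is the only place $\nip$ is used. Let $X \subseteq M^{n+1}$ be externally definable; using (b) we may assume we are projecting off the last coordinate, and we put $Z := \pi(X) \subseteq M^n$. By the left-to-right direction of Fact~\ref{fact:cs} (which uses $\nip$) there is an $\Sa M$-definable family $(X_a : a \in M^m)$ of subsets of $M^{n+1}$ such that every finite $A \subseteq X$ satisfies $A \subseteq X_a \subseteq X$ for some $a \in M^m$. Set $Z_a := \pi(X_a)$; the family $(Z_a : a \in M^m)$ is again $\Sa M$-definable. Given a finite $B \subseteq Z$, pick $c_b \in M$ with $(b,c_b) \in X$ for each $b \in B$ and apply the previous sentence to the finite set $A := \{(b,c_b) : b \in B\} \subseteq X$, obtaining $a$ with $A \subseteq X_a \subseteq X$; then $B \subseteq \pi(X_a) = Z_a$ and $Z_a = \pi(X_a) \subseteq \pi(X) = Z$, so $B \subseteq Z_a \subseteq Z$. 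Thus the family $(Z_a : a \in M^m)$ satisfies the hypothesis of the right-to-left direction of Fact~\ref{fact:cs} (which requires no $\nip$), and therefore $Z$ is externally definable.

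The main obstacle is exactly step (c). One cannot simply take $\pi(X)$ to be $M^n$ intersected with the projection of the $\Sa N$-definable set that defines $X$: a witness in $N \setminus M$ for membership of some $a \in M^n$ in that projection need not descend to a witness in $M$, so the naive projection can be strictly larger than $\pi(X)$. The Chernikov--Simon characterization of externally definable sets in $\nip$ structures is precisely what circumvents this, by trading the single external witness for an internal approximating family which, being $\Sa M$-definable, projects without difficulty.
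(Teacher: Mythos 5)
Your argument is correct and is exactly the route the paper itself indicates: Fact~\ref{fact:shelah} is cited as a theorem of Shelah and ``also a corollary to Fact~\ref{fact:cs}'', and your induction (with the Chernikov--Simon approximating family handling the projection step, the only place $\nip$ enters) is the standard way to carry that corollary out. The only loose end is that atomic formulas may apply the new predicates to non-trivial terms rather than plain variables, but since term functions are $\Sa M$-definable (and are defined by the same terms in $\Sa N$), preimages of externally definable sets under them are again externally definable, so this is routine.
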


\noindent
Fact~\ref{fact:completion-is-complete} is an easy corollary to Fact~\ref{fact:shelah}, we leave the proof to the reader.

\begin{fact}
\label{fact:completion-is-complete}
The Shelah completion of a $\nip$ structure is Shelah complete.
\end{fact}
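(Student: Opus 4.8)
The plan is to combine the Chernikov--Simon theorem (Fact~\ref{fact:cs}) with $\lambda$-saturation in order to show that on any subset of a Cartesian power of $M$ of cardinality $<\lambda$, every externally definable set is cut out by a genuinely $\Sa M$-definable set; granting this, trace definability falls out by unwinding the interpretation. The only real content is this observation, whose statement I would isolate as a lemma.

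\emph{Lemma.} If $N\in\N$, $A\sub M^N$ with $|A|<\lambda$, and $Z\sub M^N$ is externally definable, then there is an $\Sa M$-definable $Y\sub M^N$ with $A\cap Y=A\cap Z$. To prove it, use Fact~\ref{fact:cs} to fix an $\Sa M$-definable family $(X_a:a\in M^m)$ such that every finite subset of $Z$ is contained in some $X_a\sub Z$, and consider the partial type $p(x)$ in the variable $x$ ranging over $M^m$, with parameters from $A$, consisting of the conditions $b\in X_x$ for $b\in A\cap Z$ together with $c\notin X_x$ for $c\in A\sm Z$. A finite fragment of $p$ involves a finite $B_0\sub A\cap Z$ and a finite $C_0\sub A\sm Z$; choosing $a$ with $B_0\sub X_a\sub Z$ satisfies the fragment, since $X_a\sub Z$ forces $X_a\cap C_0=\emptyset$. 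As $|A|<\lambda$, $\lambda$-saturation produces a realization $a$, and $Y:=X_a$ works, since then $A\cap Y$ contains $A\cap Z$ and misses $A\sm Z$.

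With the lemma in hand, I would unwind the interpretation. We may assume $\Sa O$ is interpreted by a $\Sh M$-definable set $D\sub M^k$, a $\Sh M$-definable equivalence relation $E$ on $D$, and a bijection $\pi:D/E\to O$ (an interpretation using imaginary sorts of $\Sh M$ reduces to this by composing the finitely many equivalence relations involved, a routine step I would mention only in passing). Choose a set $R\sub D$ of $E$-class representatives; then $|R|=|O|<\lambda$, and the map $\tau:O\to M^k$ sending $o$ to the representative in $R$ of the class $\pi^{-1}(o)$ is injective. Given an $\Sa O$-definable $X\sub O^n$, its preimage $\tilde X\sub D^n\sub M^{kn}$ under $(d_1,\dots,d_n)\mapsto(\pi([d_1]_E),\dots,\pi([d_n]_E))$ is $\Sh M$-definable, hence externally definable in $\Sa M$ by Fact~\ref{fact:shelah} (here $\nip$ is used). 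Applying the lemma with $A=R^n$, which has cardinality $<\lambda$, and $Z=\tilde X$ yields an $\Sa M$-definable $Y\sub M^{kn}$ with $R^n\cap Y=R^n\cap\tilde X$. Since $(\tau(o_1),\dots,\tau(o_n))$ always lies in $R^n$ and $\pi([\tau(o)]_E)=o$ for all $o\in O$, one checks directly that for every $(o_1,\dots,o_n)\in O^n$ we have $(o_1,\dots,o_n)\in X$ iff $(\tau(o_1),\dots,\tau(o_n))\in\tilde X$ iff $(\tau(o_1),\dots,\tau(o_n))\in Y$. Thus $\Sa M$ trace defines $\Sa O$ via $\tau$.

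I do not expect a serious obstacle. The one point to get right is that the direction of Fact~\ref{fact:cs} being used --- the one requiring $\nip$ --- delivers a family $(X_a)$ defined over $\Sa M$ itself, so that $p(x)$ is genuinely a partial type over the small parameter set $A$ and $\lambda$-saturation legitimately applies; everything else is bookkeeping about interpretations.
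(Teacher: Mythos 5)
Your argument does not address the stated fact. Fact~\ref{fact:completion-is-complete} asserts that the Shelah completion $\Sh M$ of a $\nip$ structure $\Sa M$ is Shelah complete, i.e., that every set externally definable over $\Sh M$ is already $\Sh M$-definable. What you have written is instead a proof of Theorem~\ref{thm:field-ext} (equivalently, the chain Propositions~\ref{prop:trace-interpret}, \ref{prop:trace-new}, \ref{prop:field-ext-str}): that if $\Sa M$ is $\lambda$-saturated and $\nip$ and $\Sh M$ interprets some $\Sa O$ with $|O|<\lambda$, then $\Sa M$ trace defines $\Sa O$. Your lemma and the subsequent unwinding of the interpretation look correct and are close in spirit to the paper's treatment of that theorem (the paper splits it into a reduction from interpretability to trace definability and a separate compactness step, whereas you fold these into a single pass), but this establishes trace definability, not Shelah completeness of $\Sh M$, and so leaves the target statement untouched.

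For the actual Fact~\ref{fact:completion-is-complete} the shape of the argument is different. By Fact~\ref{fact:shelah}, being $\Sh M$-definable is the same as being externally definable over $\Sa M$, so one must show: if $X\sub M^n$ is externally definable over $\Sh M$, then $X$ is externally definable over $\Sa M$. Since $\Sh M$ is itself $\nip$, Fact~\ref{fact:cs} applied in $\Sh M$ gives a $\Sh M$-definable family $(X_a)_{a\in M^m}$ approximating $X$ from inside. By Fact~\ref{fact:shelah} the graph $\{(a,b):b\in X_a\}$ is externally definable over $\Sa M$, say $M^{m+n}\cap W$ for $W$ definable in an $|M|^+$-saturated $\Sa N\succ\Sa M$. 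One then realizes in $\Sa N$ the type in $y$ demanding $a\in W_y$ for every $a\in X$ and $b\notin W_y$ for every $b\in M^n\setminus X$; finite satisfiability follows from the approximating family, and a realization $c\in N^m$ gives $X=M^n\cap W_c$. Note this compactness step must be run over all of $M^n$, not just a small subset $A$ as in your lemma; your lemma is calibrated to produce agreement on a small set and so cannot deliver the required global conclusion.
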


\noindent
The Shelah completion is usually called the ``Shelah expansion".
We believe that Fact~\ref{fact:completion-is-complete} justifies our nonstandard terminology.

\subsection{Dp-rank}
We will use a few basic facts about dp-rank.
See \cite{Simon-Book} for an overview of the dp-rank.
We let $\dprk_{\Sa M} X$ be the dp-rank of an $\Sa M$-definable set $X$.
The dp-rank of $\Sa M$ is defined to be $\dprk_{\Sa M} M$.
The first three claims of Fact~\ref{fact:dp-rank} are immediate consequences of the definition of dp-rank.
The fourth is essentially due to Onshuus and Usvyatsov~\cite{OnUs}.

\begin{fact}
\label{fact:dp-rank}
Suppose $X,Y$ are $\Sa M$-definable sets.
Then
\begin{enumerate}
\item $\dprk \Sa M < \infty$ if and only if $\Sa M$ is $\nip$,
\item $\dprk_{\Sa M} X = 0$ if and only if $X$ is finite,
\item If $f : X \to Y$ is a $\Sa M$-definable surjection then $\dprk_{\Sa M} Y \leq \dprk_{\Sa M} X$,
\item $\dprk_{\Sh M} X = \dprk_{\Sa M} X$ when $\Sa M$ is $\nip$.
\end{enumerate}
\end{fact}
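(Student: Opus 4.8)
The plan is to present dp-rank via ict-patterns, read off items (1)--(3) directly from that presentation, and treat item (4) --- the one with content --- by transferring ict-patterns across the Shelah expansion using Fact~\ref{fact:shelah}.

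Recall that an ict-pattern of depth $\kappa$ for a definable set $X$ (computed in a monster model of the ambient theory) consists of formulas $(\phi_j(x,y_j):j<\kappa)$ together with an array $(b^j_i:j<\kappa,\ i<\omega)$ whose rows $(b^j_i:i<\omega)$ are mutually indiscernible, such that for every $\eta\in\omega^{\kappa}$ the partial type
$$\{\,x\in X\,\}\ \cup\ \{\,\phi_j(x,b^j_{\eta(j)}):j<\kappa\,\}\ \cup\ \{\,\neg\phi_j(x,b^j_i):j<\kappa,\ i\neq\eta(j)\,\}$$
is consistent, and $\dprk X$ is the supremum of such $\kappa$ (with $\dprk X=\infty$ meaning ict-patterns of arbitrarily large cardinal depth exist). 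Item (1) is then just the reformulation of $\nip$ in this language --- a formula with the independence property produces ict-patterns of every depth, and a structure with ict-patterns of unbounded depth has such a formula; see \cite{Simon-Book}. For item (2): a finite $X$ admits no depth-$1$ pattern, since consistency of the displayed type for every $i<\omega$ would yield pairwise distinct $a_i\in X$ (because $\phi_0(a_i,b^0_i)\wedge\neg\phi_0(a_{i'},b^0_i)$ forces $a_i\neq a_{i'}$), while an infinite $X$ carries the depth-$1$ pattern with $\phi_0(x,y)$ equality and $(b^0_i:i<\omega)$ any indiscernible sequence of distinct points of $X$. For item (3): given an ict-pattern $(\psi_j(y,z_j),(b^j_i))$ of depth $\kappa$ for $Y$, set $\phi_j(x,z_j):=\psi_j(f(x),z_j)$; surjectivity of $f$ lets one lift, for each $\eta$, a realization of the $Y$-pattern to one in $X$, the array is untouched, and so $\dprk X\geq\kappa$.

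For item (4) I would prove the two inequalities separately. The inequality $\dprk_{\Sh M}X\geq\dprk_{\Sa M}X$ is the ``reduct'' direction: since $\Sa M$ is a reduct of $\Sh M$, an ict-pattern for $X$ in $\Sa M$ becomes one in $\Sh M$ once its rows are stretched to a sufficiently long length in a monster model of $\mathrm{Th}(\Sa M)$ and then replaced, via Erd\H{o}s--Rado inside a monster model of $\mathrm{Th}(\Sh M)$, by mutually $\Sh M$-indiscernible subrows (these remain mutually $\Sa M$-indiscernible, and the pattern survives since each finite subpattern is inherited). The reverse inequality $\dprk_{\Sh M}X\leq\dprk_{\Sa M}X$ is where $\nip$ is essential and is, as the paper notes, essentially the theorem of Onshuus and Usvyatsov~\cite{OnUs}. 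The idea: an ict-pattern of depth $\kappa$ for $X$ in $\Sh M$ uses only $\Sh M$-formulas, so by Fact~\ref{fact:shelah} every instance occurring in it is the trace on $M$ of a set definable in an $|M|^{+}$-saturated $\Sa N\succ\Sa M$; rewriting the $\phi_j$ as $\Sa N$-formulas and realizing the entire configuration of rows and witnesses already inside a model of $\mathrm{Th}(\Sa M)$ yields an ict-pattern of depth $\kappa$ for the corresponding $\Sa N$-definable set $X_{\Sa N}$ with $X_{\Sa N}\cap M^n=X$, whence $\dprk_{\Sa M}X=\dprk_{\Sa N}X_{\Sa N}\geq\kappa$ by elementarity.

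The main obstacle is precisely this last transfer: one must pass from $\mathrm{Th}(\Sh M)$-data to $\mathrm{Th}(\Sa M)$-data in such a way that both mutual indiscernibility of the rows and consistency of the pattern for every $\eta$ are preserved, and this is the real content of \cite{OnUs}, which I would cite rather than reconstruct. A second and equivalent route to the $\leq$ inequality, also behind \cite{OnUs} and the standard treatments, uses the honest definitions of Chernikov--Simon (Fact~\ref{fact:cs}) to approximate each externally definable set appearing in the $\Sh M$-pattern from inside by an $\Sa M$-definable set uniformly enough to descend the whole pattern.
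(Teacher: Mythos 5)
Your proposal is correct and takes essentially the same route as the paper, which offers no proof at all: it declares items (1)--(3) immediate from the definition of dp-rank and attributes item (4) to Onshuus--Usvyatsov~\cite{OnUs}. Your reading of (1)--(3) off the ict-pattern definition is fine, and deferring the substantive inequality $\dprk_{\Sh M} X \leq \dprk_{\Sa M} X$ to \cite{OnUs} (with the honest-definitions route via Fact~\ref{fact:cs} as an alternative) is exactly the citation the paper relies on.
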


\noindent
Fact~\ref{fact:dp-qp} is a theorem of Dolich, Goodrick, and Lippel~\cite{dp-basic}.

\begin{fact}
\label{fact:dp-qp}
The dp-rank of $\Q_p$ is one.
\end{fact}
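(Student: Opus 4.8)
\noindent
The lower bound $\dprk_{\Q_p}\Q_p\geq 1$ is immediate from Fact~\ref{fact:dp-rank}(2), since $\Q_p$ is infinite, so the content is the upper bound: $\Q_p$ is dp-minimal. The plan is to rule out an ict-pattern of depth $2$ in a single object variable over the home sort. Thus I would assume given formulas $\phi(x;\bar y)$ and $\psi(x;\bar z)$ with $x$ a single variable ranging over $\Q_p$, parameter sequences $(\bar a_i)_{i<\omega}$ and $(\bar b_j)_{j<\omega}$, and elements $x_{ij}\in\Q_p$ with $x_{ij}\in\phi(\Q_p;\bar a_i)\cap\psi(\Q_p;\bar b_j)$, with $x_{ij}\notin\phi(\Q_p;\bar a_{i'})$ whenever $i'\neq i$, and with $x_{ij}\notin\psi(\Q_p;\bar b_{j'})$ whenever $j'\neq j$; the goal is a contradiction. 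By the usual Ramsey extraction there is no harm in taking $(\bar a_i)_i$ and $(\bar b_j)_j$ mutually indiscernible.

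\noindent
The structural input is Denef's $p$-adic cell decomposition theorem (equivalently, Macintyre's quantifier elimination together with Hensel's lemma): each fibre $\phi(\Q_p;\bar y)$ and $\psi(\Q_p;\bar z)$ is a finite union, of size bounded uniformly over the family, of cells, where a cell is a Boolean combination of at most two balls, intersected with a congruence condition $x-c\in\lambda P_n$ for some $n$ bounded uniformly over the family. A routine sequence of pigeonhole steps --- passing to the finitely many Boolean ``types'' of the elements $x_{ij}$ over the relevant balls, then localizing the $x_{ij}$ away from the relevant centres so that each congruence condition becomes a finite union of balls of bounded size, then restricting to infinite subsets of the index sets and re-extracting an indiscernible pattern --- reduces to the case where $\phi(\Q_p;\bar a_i)=B_i$ and $\psi(\Q_p;\bar b_j)=C_j$ are single balls.

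\noindent
At that point the ultrametric tree of balls in $\Q_p$ finishes the argument, and indiscernibility is no longer needed. Since $x_{ij}\in B_i\setminus B_{i'}$ and $x_{i'j}\in B_{i'}\setminus B_i$ for $i\neq i'$, neither of $B_i$, $B_{i'}$ contains the other, so --- two balls incomparable under inclusion being disjoint --- the balls $B_i$ are pairwise disjoint, and likewise the balls $C_j$. On the other hand $x_{ij}\in B_i\cap C_j$, so $B_i\cap C_j\neq\emptyset$ and therefore $B_i\subseteq C_j$ or $C_j\subseteq B_i$ for all $i,j$. Now split on $B_0$ versus $C_0$. If $B_0\subseteq C_0$, then for each $i\geq 1$ one cannot have $C_0\subseteq B_i$ (that would force $B_0\cap B_i\neq\emptyset$), so $B_i\subseteq C_0$ for every $i$; but then for each $i$ one cannot have $B_i\subseteq C_1$ (that would force $B_i\subseteq C_0\cap C_1=\emptyset$), so $C_1\subseteq B_i$ for every $i$, whence $C_1\subseteq B_0\cap B_1=\emptyset$, contradicting $C_1\neq\emptyset$. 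The case $C_0\subseteq B_0$ is symmetric. Hence no ict-pattern of depth $2$ exists, so the dp-rank of $\Q_p$ is $1$.

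\noindent
The main obstacle is the reduction in the second paragraph: unlike balls, the congruence sets $\{x:x-c\in\lambda P_n\}$ do not form a tree --- they accumulate at their centres --- so cells cannot simply be replaced by balls wholesale; the point is that only the finitely many elements $x_{ij}$ ever matter, and once their behaviour modulo $P_n$ relative to the relevant centres is fixed the congruences become locally inert, but getting the interleaving of pigeonhole and Ramsey extraction right is where the real work lies. An alternative, more conceptual route is an Ax--Kochen--Ershov-style transfer: $\Q_p$ is Henselian of residue characteristic $p$, with finite residue field and with value group elementarily equivalent to Presburger arithmetic $(\Z,+,<)$, both of dp-rank at most one, which forces dp-minimality of $\Q_p$. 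Either way, the result is due to Dolich, Goodrick, and Lippel~\cite{dp-basic}.
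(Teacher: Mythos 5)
The paper does not prove this statement; it records it as a fact and cites Dolich, Goodrick, and Lippel~\cite{dp-basic}. So there is no ``paper proof'' to compare against, and the question is just whether your sketch is sound. Your overall strategy --- rule out a depth-$2$ ict-pattern in a single variable by invoking $p$-adic cell decomposition and then exploiting the tree structure of balls --- is the standard route and matches the cited reference in spirit. The last paragraph of your argument, the pure ball combinatorics, is correct and tight: pairwise disjointness of the $B_i$ and of the $C_j$ together with pairwise comparability of $B_i$ with $C_j$ forces the contradiction, and as you note this step needs neither indiscernibility nor more than two indices in each direction.

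The gap is in the middle, and it is a real one: the reduction ``to the case where $\phi(\Q_p;\bar a_i)$ and $\psi(\Q_p;\bar b_j)$ are single balls'' is not a routine pigeonhole. Pigeonholing down from a bounded union of cells to a single cell is fine, since $x_{ij}\notin\phi(\Q_p;\bar a_{i'})$ in particular rules out the $k$-th cell of $\bar a_{i'}$, so the ict-pattern survives the passage to one cell. The problem is what comes after: a cell of the form $\{x : \gamma_1 \leq \valp(x-c) < \gamma_2,\ x-c\in\lambda P_n\}$ is an \emph{infinite} union of balls when $\gamma_2-\gamma_1$ is large, so you cannot simply pigeonhole among a bounded list of balls. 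You must first control the valuations $\valp(x_{ij}-c_i)$ across the array (using mutual indiscernibility and a separate argument about how those valuations can interleave), and only then localize so that the $P_n$-condition cuts out a bounded union of balls around each relevant $x_{ij}$. You flag this yourself as ``where the real work lies,'' which is honest, but that means the proof as written is incomplete at exactly that point. The alternative Ax--Kochen--Ershov-style remark is also stated too loosely: there is no off-the-shelf transfer theorem saying that in mixed characteristic a Henselian valued field with dp-minimal residue field and value group is dp-minimal; for $\Q_p$ one either does the cell-decomposition work or invokes later general results (e.g.\ on $P$-minimal or dp-minimal valued fields) that themselves encapsulate that work.
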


\section{$\Sa B$}
\subsection{Proof of Theorem~\ref{thm:2}}
We identify the minimal positive element of $\Gamma$ with $1$ so that $\Z$ is the minimal non-trivial convex subgroup of $\Gamma$.
By Fact~\ref{fact:convex} $\Z$ is a $\ksh$-definable subset of $\Gamma$.
Let $v : K^\times \to \Gamma/\Z$ be the composition of $\valp$ with the quotient $\Gamma \to \Gamma/\Z$.
We equip $\Gamma/\Z$ with a group order by declaring $a + \Z \leq b + \Z$ when $a \leq b$, so $v$ is a $\ksh$-definable valuation on $K$.
Let $W$ be the valuation ring of $v$ and $\mfrak_W$ be the  maximal ideal of $W$.
So $W$ is the set of $a \in K$ such that $\valp(a) \geq m$ for some $m \in \Z$ and $\mfrak_W$ is the set of $a \in K$ such that $\valp(a) > m$ for all $m \in \Z$.
It is easy to see that for every $a \in W$ there is a unique $a' \in \Q_p$ such that $a - a' \in \mfrak_W$.
We identify $W/\mfrak_W$ with $\Q_p$ so that the residue map $W \to \Q_p$ is the usual standard part map $\st$.
So $\Q_p$ is a $\ksh$-definable set of imaginaries.
\newline

\noindent
Lemma~\ref{lem:st} is routine and left to the reader.

\begin{lem}
\label{lem:st}
Suppose that $X$ is a closed $\Q_p$-definable subset of $\Z^n_p$ and $X'$ is the subset of $V^n$ defined by any formula defining $X$.
Then $\st(X')$ agrees with $X$.
\end{lem}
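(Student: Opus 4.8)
The plan is to unwind the definitions and reduce the statement to a one-line elementarity argument, with the closedness hypothesis entering at exactly one point. Fix a formula $\varphi(x)$, with parameters from $\Q_p$, defining $X$ in $\Q_p$, so that $X' = \{ a \in V^n : K \models \varphi(a) \}$. First I would record that $\st$ is defined on $V^n$ and takes values in $\Z_p^n$: every $a \in V$ satisfies $\valp(a) \geq 0$, hence $V \subseteq W$, and if $a' \in \Q_p$ with $a - a' \in \mfrak_W$ then $\valp(a') \geq 0$ as well; applying this coordinatewise gives a map $\st \colon V^n \to \Z_p^n$, so $\st(X') \subseteq \Z_p^n$ and it is meaningful to compare it with $X$. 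Then prove $\st(X') = X$ by the two inclusions.

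For $X \subseteq \st(X')$: since $\Q_p \prec K$, the formula $\varphi$ defines a superset of $X$ in $K$, and $X \subseteq \Z_p^n \subseteq V^n$, so $X \subseteq X'$; as $\st$ fixes every element of $\Q_p$ (by the uniqueness clause in the definition of $\st$, since $c - c = 0 \in \mfrak_W$), we get $X = \st(X) \subseteq \st(X')$.

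The substance is the reverse inclusion $\st(X') \subseteq X$, and this is where I would use that $X$ is closed. Take $a \in X'$ and set $b := \st(a) \in \Z_p^n$, so that $a - b \in \mfrak_W^n$, i.e. $\valp(a_i - b_i) > m$ for every coordinate $i$ and every $m \in \Z$. Suppose for contradiction that $b \notin X$. Since $X$ is closed in the $p$-adic topology on $\Z_p^n$ and $b \in \Z_p^n$, there is a standard $m \in \N$ such that every $x \in \Q_p^n$ with $\valp(x_i - b_i) \geq m$ for all $i$ satisfies $\neg\varphi(x)$. This is a first-order statement with parameters in $\Q_p$ which holds in $\Q_p$, hence holds in $K$ by elementarity; but $a \in K^n$ satisfies $\valp(a_i - b_i) > m$ for all $i$ and $K \models \varphi(a)$, a contradiction. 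Therefore $b \in X$, as wanted.

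The only point that requires any care, and it is minor, is to keep the two valuations apart: closedness of $X$ in the $p$-adic topology supplies a ball of \emph{standard} radius around $b$ that avoids $X$, and $a$ lies in that ball precisely because $a - b \in \mfrak_W^n$, i.e. $a$ is infinitesimally close to $b$ for the coarse valuation $v$. I expect this to be the entire content of the argument; note in particular that no saturation of $K$ is used, and that closedness of $X$ is essential, since without it $\st$ could push a point of $X'$ into the topological closure of $X$ rather than into $X$.
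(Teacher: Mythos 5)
The paper leaves this lemma to the reader, so there is no written proof to compare against; your argument is the one the author intends and it is correct. Both inclusions are handled cleanly: $X\subseteq\st(X')$ follows from $\Q_p\prec K$ together with the observation that $\st$ fixes $\Q_p$-points, and the reverse inclusion is exactly the elementarity-plus-closedness contrapositive you give — closedness of $X$ in $\Z_p^n$ yields a ball of standard radius around a purported $b=\st(a)\notin X$ avoiding $X$, this transfers to $K$, and $a$ sits in that ball because $a-b\in\mfrak_W^n$, contradicting $K\models\varphi(a)$. Your preliminary check that $V\subseteq W$ and that $\st$ lands in $\Z_p^n$ is a worthwhile sanity check, and your closing remarks (no saturation needed; closedness is essential or $\st$ only lands in the closure) are accurate.
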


\noindent
Given $B \in \B$ such that $B = B(a,t)$ we let $\rad(B) = t$.
As $\rad : \B \to \Gamma_\geq$ is surjective and $K$-definable we consider $\Gamma_\geq$ to be an imaginary sort of $\Sa B$ and $\rad$ to be a $\Sa B$-definable function.
Fix $\gamma \in \Gamma$ such that $\gamma > \N$.
Let $E$ be the set of $B \in \B$ such that $\rad(B) = \gamma$, so $E$ is $\Sa B$-definable.
\newline

\begin{proof}
It suffices to show that $\Sh B$ interprets $\pring)$.
Note that for any $a \in V$ and $b \in B(a,\gamma)$ we have $\st(a) = \st(b)$.
So we define a surjection $f : E \to \Z_p$ by declaring $f( B(a,\gamma) ) := \st(a)$ for all $a \in V$.
Let $\approx$ be the equivalence relation on $E$ where $B_0 \approx B_1$ if and only if $f(B_0) = f(B_1)$.
Note that for any $B_0,B_1 \in \B$ we have $B_0 \approx B_1$ if and only if 
$$ \{ B' \in \B : \rad(B') \in \N , B_0 \subseteq B' \} = \{ B' \in \B : \rad(B') \in \N , B_1 \subseteq B' \}. $$
So $\approx$ is $\Sh B$-definable.
Let $f : E^n \to \Z^n_p$ be given by 
$$ f(B_1,\ldots,B_n) = (f(B_1),\ldots,f(B_n)) \quad \text{for all} \quad B_1,\ldots,B_n \in E. $$
Suppose that $X$ is a $\Q_p$-definable subset of $\Z^n_p$.
We show that $f^{-1}(X)$ is $\Sh B$-definable.
As $X$ is $\Q_p$-definable it is a boolean combination of closed $\Q_p$-definable subsets of $\Z^n_p$, so we may suppose that $X$ is closed.
Let $X'$ be the subset of $V^n$ defined by any formula defining $X$.
Let $Y_0$ be the set of $B \in E$ such that $B \cap X' \neq \emptyset$ and $Y$ be the set of $B \in E$ such that $B \approx B_0$ for some $B_0 \in Y_0$.
Observe that $Y_0$ is $\Sa B$-definable and $Y$ is $\Sh B$-definable.
Note that $Y_0$ is the set of balls of the form $B(a,\gamma)$ for $a \in X'$.
\newline

\noindent
We show that $Y = f^{-1}(X)$.
Suppose $B(a,\gamma) \in f^{-1}(X)$.
So $\st(a) \in X$.
We have $B(\st(a),\gamma) \in Y_0$ and $B(\st(a),\gamma) \approx B(a,\gamma)$, so $B(a,\gamma) \in Y$.
Now suppose that $B(a,\gamma) \in Y$ and fix $B(b,\gamma) \in Y_0$ such that $B \approx B_0$.
We may suppose that $b \in X'$.
As $X$ is closed an application of Lemma~\ref{lem:st} shows that $\st(b) \in X$.
So $\st(a) = \st(b) \in X$.
So $B(a,\gamma) \in f^{-1}(X)$.
\end{proof}

\newpage

\subsection{Proof of Theorem~\ref{thm:1}}
We first prove an easy lemma.

\begin{lem}
\label{lem:map}
Any $K$-definable function $\B \to K^m$ has finite image.
\end{lem}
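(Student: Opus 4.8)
The plan is to show that a $K$-definable $f : \B \to K^m$ cannot take infinitely many values by exploiting the fact that $\B$, as a $K$-definable set of imaginaries, has dp-rank strictly larger than $1$ while $K^m$ has dp-rank $m$ — no wait, that inequality goes the wrong way. Instead I would argue directly from the geometry of the valued field together with the fact that $\B$ is an imaginary sort: a ball $B(a,t)$ is not determined by a single element of $K$, and a $K$-definable function out of $\B$ must be invariant under the relevant equivalence relation. Concretely, suppose $f : \B \to K^m$ is $K$-definable with infinite image. By partitioning according to $\rad$, there is some fixed radius $t \in \Gamma_\geq$ such that the restriction of $f$ to $\{B \in \B : \rad(B) = t\}$ still has infinite image; this set is in $K$-definable bijection (via centers, modulo the ball equivalence) with the quotient $V/\!\!\sim_t$ where $a \sim_t a'$ iff $\valp(a-a') \geq t$.

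The key step is then the following: a $K$-definable function from $V/\!\!\sim_t$ to $K^m$ must factor through only finitely many fibers. Here I would invoke elimination of imaginaries in $\Q_p$ up to the geometric sorts (the $p$-adic analogue of the Hrushovski–Haskell–Macpherson / Hrushovski–Martin result), or more simply a direct quantifier-elimination / cell-decomposition argument: the function $f$ pulls back to a $K$-definable function $g : V \to K^m$ constant on $\sim_t$-classes, and by $p$-adic cell decomposition (in the field sort $K \models \mathrm{Th}(\Q_p)$) such a $g$ is piecewise given by terms in the language of valued fields; a term that is constant on every ball of radius $t$ must be locally constant, and combined with the fact that $V$ is covered by finitely many cosets of the ball of radius $t$ at each stage — more carefully, one uses that a definable locally constant function on the compact-like set $V$ (in a sufficiently saturated model the relevant combinatorics still go through via compactness of $\Z_p$) takes finitely many values. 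The cleanest route: reduce to $\Q_p$ itself, where $\Z_p$ is genuinely compact, so a definable function $\Z_p \to \Q_p^m$ that is constant on each ball of fixed radius $t = v(p^k)$ factors through the finite set $\Z_p / p^k\Z_p$, hence has finite image; then transfer back to $K$ by elementarity.

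The main obstacle I anticipate is making the "locally constant definable function has finite image" step rigorous in the saturated model $K$ rather than in $\Q_p$, since $V$ is not compact in $K$ and $\Gamma$ has infinitesimal and infinite elements, so a radius $t$ need not be of the form $v(p^k)$. The way around this is precisely the case distinction on $t$: if $t \in \Z$ (i.e. $t = v(p^k)$ for some $k \in \N$, or a bounded integer), then $V/\!\!\sim_t$ is finite outright as a quotient — no, $V$ is infinite — but $V/\!\!\sim_t$ is then interpretable in the residue field data at finite level and the argument descends to $\Q_p$ by elementarity of $K \succ \Q_p$; if $t$ is infinitesimal or infinite, one observes $f$ is constant on the still-larger $\sim_{t'}$-classes for appropriate $t'$, or directly that the image lies in a single $\st$-class, contradicting infiniteness in a way controlled by Fact~\ref{fact:dp-qp} on dp-rank. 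Assembling these cases, $f$ has finite image in every case, which is the lemma.

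Alternatively, and perhaps more robustly, I would bypass cell decomposition entirely and use dp-rank: by Fact~\ref{fact:dp-rank}(3) a $K$-definable surjection $\B \to \mathrm{im}(f) \subseteq K^m$ forces $\dprk_K \mathrm{im}(f) \leq \dprk_K \B$; but that bounds the image from above, not below, so to conclude finiteness I need the reverse — that $\B$ itself has dp-rank $0$ restricted to fibers, or rather that any infinite definable subset of $K^m$ has dp-rank $\geq 1$ (true, by Fact~\ref{fact:dp-rank}(2)) while the function being $K$-definable out of the ball sort cannot "see" a full $1$-dimensional family because balls of a fixed radius are rigid. I expect the honest proof the author gives is the short cell-decomposition argument reduced to $\Q_p$, and that is the shape I would write up, with the case analysis on the radius $t$ being the only point requiring care.
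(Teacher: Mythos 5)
There is a genuine gap at the very first concrete step of your argument: you assert that after partitioning $\B$ by radius, some single radius $t$ must already carry infinite image. But the partition is into infinitely many pieces (one for each $t \in \Gamma_\geq$), so this simply does not follow from partitioning; the image could a priori be an infinite union of finite sets $Y_t$, one per radius. Making the reduction legitimate would itself require a nontrivial argument, and once you supply such an argument you find you did not need the reduction in the first place. The subsequent machinery — cell decomposition, local constancy of terms, compactness of $\Z_p$, and the case analysis according to whether the radius $t$ is a standard natural number, infinitesimal, or infinite — is then a lot of work, and you yourself flag the case analysis as the delicate point without resolving it. In a saturated $K$ the set $V$ is not compact and $\Gamma$ contains nonstandard elements, so ``locally constant definable implies finitely many values'' does not transfer up from $\Q_p$ in the naive way your sketch suggests.

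The paper's proof is much shorter and sidesteps all of this. First reduce to $m = 1$: if every coordinate projection of the image were finite, the image would be finite. Next, use that an infinite $K$-definable subset of $K$ has nonempty interior, so it suffices to show the image of $f$ has empty interior. ``The image of a definable function $\B \to K$ has interior'' is a first-order condition, so by $\Q_p \prec K$ it is enough to rule this out for $\Q_p$-definable functions $\B(\Q_p) \to \Q_p$ — and here no definability is even needed: $\B(\Q_p)$ is countable, hence the image of \emph{any} function $\B(\Q_p) \to \Q_p$ is countable, while a nonempty open subset of $\Q_p$ is uncountable. There is no fixed-radius reduction, no cell decomposition, and no case analysis on the radius; the entire content is the countability of $\B(\Q_p)$ together with the dichotomy that definable subsets of $K$ are finite or have interior. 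Your closing instinct that ``the honest proof is a short argument reduced to $\Q_p$'' is right in spirit, but the correct reduction is to the image (a definable subset of $K$), not to a fixed radius, and the key cardinality fact is about $\B(\Q_p)$ itself rather than about $\Z_p / p^k\Z_p$.
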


\begin{proof}
If $f : \B \to K^m$ has infinite image then there is a coordinate projection $e : K^m \to K$ such that $e \circ f$ has infinite image.
So we suppose $m = 1$.
Recall that if $X$ is a $K$-definable subset of $V$ then $X$ is either finite or has interior.
So it suffices to show that the image of any $K$-definable function $\B \to K$ has empty interior.
Let $\B(\Q_p)$ be the set of $\valp$-balls in $\Z_p$.
It is enough to show that the image of any function $\B(\Q_p) \to \Q_p$ has empty interior.
This holds as $\B(\Q_p)$ is countable and every nonempty open subset of $\Z_p$ is uncountable.
\end{proof}

\noindent
We now prove Theorem~\ref{thm:1}.

\begin{proof}
Suppose that Conjecture~\ref{conj:obv} holds and $\Sa B$ interprets an infinite field $L$.
By Conjecture~\ref{conj:obv} there is a $K$-definable bijection $L \to K^m$ for some $m$, so for some $k$ there is a $K$-definable surjection $\B^k \to K^m$.
This contradicts Lemma~\ref{lem:map}.
\end{proof}

\noindent
Let $\Sa B(\Q_p)$ be the structure induced on $\B(\Q_p)$ by $\Q_p$.
Conjecture~\ref{conj:obv} implies that $\Sa B(\Q_p)$ does not interpret an infinite field.
By Fact~\ref{fact:delon} $\Sa B(\Q_p)$ is Shelah complete.
This is why we start with a proper elementary extension of $\Q_p$.

\subsection{Dp-rank of $E$}
The examples in \cite{big-nip,field-in-Shelah} are weakly o-minimal, hence dp-rank one.
It is easy to see that the dp-rank of $\Sa B$ is two, but $E$ is dp-rank one.

\begin{prop}
\label{prop:dp-minimal}
Then the dp-rank of $E$ (considered as either a $K$-definable or $\ksh$-definable set) is one.
\end{prop}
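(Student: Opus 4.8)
The plan is to derive this from the elementary properties of dp-rank collected in Fact~\ref{fact:dp-rank} together with Fact~\ref{fact:dp-qp}. First I would dispose of the $\ksh$-definable case by reducing it to the $K$-definable one: $K$ is $\nip$, being elementarily equivalent to $\Q_p$, so Fact~\ref{fact:dp-rank}(4) gives $\dprk_{\ksh} E = \dprk_K E$, and it therefore suffices to prove $\dprk_K E = 1$.

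For the upper bound, the point is that $E$ is the image of $K$ under a $K$-definable map. Since $\gamma \geq 0$ we have $B(a,\gamma) \subseteq V$ for every $a \in V$, so setting $h(a) := B(a,\gamma)$ for $a \in V$ and $h(a) := B(0,\gamma)$ for $a \in K \setminus V$ defines a $K$-definable surjection $h : K \to E$. Then Fact~\ref{fact:dp-rank}(3) and Fact~\ref{fact:dp-qp} yield $\dprk_K E \leq \dprk_K K = 1$.

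For the lower bound it is enough to observe that $E$ is infinite and invoke Fact~\ref{fact:dp-rank}(2). This is immediate: distinct elements of $\Z_p$ have valuation a natural number, hence lie in distinct balls of radius $\gamma > \N$, so $a \mapsto B(a,\gamma)$ is injective on $\Z_p \subseteq V$ and $|E| \geq 2^{\aleph_0}$. Combining the two bounds gives $\dprk_K E = 1$.

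I do not expect a genuine obstacle. The only points that need (routine) care are that the clauses of Fact~\ref{fact:dp-rank} are being applied to the imaginary sort $E$ rather than to a subset of a Cartesian power of $K$ --- this is exactly the setting in which dp-rank over $M^{\mathrm{eq}}$ is developed, and the Onshuus--Usvyatsov input behind Fact~\ref{fact:dp-rank}(4) already covers imaginaries --- and that $\dprk_K K = 1$ because dp-rank is invariant under elementary equivalence, so that Fact~\ref{fact:dp-qp} applies to $K$ and not just to $\Q_p$.
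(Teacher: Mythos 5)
Your proof is correct and follows essentially the same route as the paper: reduce to the $K$-definable case by Fact~\ref{fact:dp-rank}(4), get the lower bound from $E$ being infinite via Fact~\ref{fact:dp-rank}(2), and get the upper bound from Fact~\ref{fact:dp-rank}(3) applied to the $K$-definable surjection $a \mapsto B(a,\gamma)$ together with Fact~\ref{fact:dp-qp}. The only (immaterial) difference is that you extend this map to a surjection $K \to E$, whereas the paper works directly with the surjection $V \to E$ and notes $\dprk_K V = 1$.
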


\begin{proof}
We apply Fact~\ref{fact:dp-rank}.
Note that $\dprk_{K} E$ agrees with $\dprk_{\ksh} X$.
As $E$ is infinite $\dprk_{K} E \geq 1$.
By Fact~\ref{fact:dp-qp} we have $\dprk_{K} V = 1$. 
The map $V \to E$ given by $a \mapsto B(a,\gamma)$ is $K$-definable and surjective so $\dprk_K E = 1$.
\end{proof}

\noindent
Letting $\Sa E$ be the structure induced on $E$ by $K$ we see that $\Sa E$ has dp-rank one, $\Sh E$ interprets $\Q_p$, and Conjecture~\ref{conj:obv} implies that $\Sa E$ does not interpret an infinite field.

\subsection{The geometric sorts}
We first recall some well-known facts about the ``geometric sorts" introduced in \cite{Haskell2006}.
Let $L$ be a valued field with valuation ring $O$.
An \textbf{$n$-lattice} is a free rank $n$ $O$-submodule of $K^n$.
Let $\Cal S_n(L)$ be the set of $n$-lattices.
It is well known that there is a canonical bijection $\Cal S_n(L) \to \Gl_n(L)/\Gl_n(O)$ so we take $\Cal S_n(L)$ to be an $L$-definable set of imaginaries.
\newline

\noindent
It is shown in \cite{Hrushovski2018} that $K$ eliminates imaginaries down to certain ``geometric sorts".
One of the two kinds of ``geometric sorts" is $\mathcal{S}_n(K)$.
We describe the canonical injection $\B \to \Cal S_2(K)$.
Fix $B \in \B$.
Let $R$ be the $V$-submodule of $K^2$ generated by $\{1\} \times B$.
It is easy to see that $R$ is a $2$-lattice and $R \cap [\{1\} \times K] = B$.
\newline

\noindent
Note that $\valp : K^\times \to \Gamma$ gives an isomorphism $\Cal S_1(K) \to \Gamma$.
Recall that the structure induced on $\Gamma$ by $K$ is interdefinable with $(\Gamma,+,<)$.
We expect that $(\Gamma,+,<)^{\mathrm{Sh}}$ cannot interpret an infinite field.

\begin{prop}
\label{prop:geometric-sorts}
Let $\Sa S_n$ be the structure induced on $\Cal S_n(K)$ by $K$.
If $n \geq 2$ then $\Sh S_n$ interprets $\Q_p$.
Conjecture~\ref{conj:obv} implies that $\Sa S_n$ does not interpret an infinite field.
\end{prop}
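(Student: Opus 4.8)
The plan is to establish the two assertions separately, in each case reducing to what is already known for $\Sa B$.

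\emph{$\Sh S_n$ interprets $\Q_p$.} Fix $n \geq 2$ and recall the $K$-definable injection $\B \to \Cal S_2(K)$ described above. When $n > 2$, compose it with the $K$-definable injection $\Cal S_2(K) \to \Cal S_n(K)$ sending a $2$-lattice $R \subseteq K^2$ to $R \times V^{n-2} \subseteq K^2 \times K^{n-2} = K^n$; on representatives this is the block embedding $\Gl_2 \hookrightarrow \Gl_n$, so it is $K$-definable, and it is visibly injective. We obtain a $K$-definable injection $\iota \colon \B \to \Cal S_n(K)$ whose image $D := \iota(\B)$ is $K$-definable, hence $\Sa S_n$-definable, and via $\iota$ the structure $\Sa B$ is isomorphic to the structure induced on $D$ by $\Sa S_n$. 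I would then note that this persists under Shelah completion: fixing a sufficiently saturated $K^{\ast} \succ K$, the Shelah completions of $\Sa B$ and of $\Sa S_n$ are read off inside $\Sa B(K^{\ast})$ and $\Sa S_n(K^{\ast})$, and the $K^{\ast}$-definable injection extending $\iota$ identifies $\Sa B(K^{\ast})$ with the structure induced on $\iota(\B(K^{\ast}))$ by $\Sa S_n(K^{\ast})$, so an externally definable subset of $\B^k$ transports to an externally definable subset of $\Cal S_n(K)^k$ lying in $D^k$. Hence $\Sh S_n$ interprets $\Sh B$, which interprets $\Q_p$ by Theorem~\ref{thm:2}; so $\Sh S_n$ interprets $\Q_p$.

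\emph{Conjecture~\ref{conj:obv} implies $\Sa S_n$ does not interpret an infinite field.} I would follow the proof of Theorem~\ref{thm:1}, replacing Lemma~\ref{lem:map} by the statement that every $K$-definable function from a finite power of $\Cal S_n(K)$ to $K^m$ has finite image. As in Lemma~\ref{lem:map} --- pass to a coordinate projection, use that a $K$-definable subset of $V$ is finite or has interior, and replace the function by one defined over $\Q_p$ --- this reduces to the fact that $\Cal S_n(\Q_p)$ is countable. That in turn holds because any two $n$-lattices in $\Q_p^n$ are commensurable (their sum and intersection are again $n$-lattices, of finite index in each), so every $R \in \Cal S_n(\Q_p)$ satisfies $p^k\Z_p^n \subseteq R \subseteq p^{-k}\Z_p^n$ for some $k \in \N$, and for fixed $k$ such $R$ correspond to submodules of the finite module $(\Z_p/p^{2k}\Z_p)^n$, so there are finitely many. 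Granting this, suppose Conjecture~\ref{conj:obv} holds and $\Sa S_n$ interprets an infinite field $L$. Then $L$ is interpretable in $K$, so by Conjecture~\ref{conj:obv} together with the fact that a finite extension of $K$ is $K$-definably in bijection with a power of $K$, there is a $K$-definable bijection $L \to K^m$ for some $m$. Composing an $\Sa S_n$-definable, hence $K$-definable, surjection from a power $\Cal S_n(K)^k$ onto $L$ with this bijection produces a $K$-definable function $\Cal S_n(K)^k \to K^m$ with infinite image, contradicting the strengthened form of Lemma~\ref{lem:map}.

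No genuinely new difficulty arises; the lattice bookkeeping over $\Q_p$, the reduction in the analogue of Lemma~\ref{lem:map}, and the identification of finite extensions of $K$ with affine spaces over $K$ are transcriptions of arguments already present. The one point I would take care over is that $\Sh S_n$, not merely $\Sa S_n$, interprets $\Sh B$: the content is simply that external definability in $\Sa B$ and in $\Sa S_n$ can be witnessed by a single saturated elementary extension of $K$, through which $\iota$ extends, so that the two notions line up along $\iota$.
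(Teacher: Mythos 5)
Your proposal is correct and follows exactly the route the paper takes: for the first claim, transport Theorem~\ref{thm:2} along the $K$-definable injection $\B \to \Cal S_2(K) \to \Cal S_n(K)$, and for the second, adapt the proof of Theorem~\ref{thm:1} using the countability of $\Cal S_n(\Q_p)$. The paper's proof is only two sentences; you have merely made explicit the details it elides, namely the block-embedding form of $\Cal S_2(K) \to \Cal S_n(K)$, the transport of external definability along a $K$-definable injection, the version of Lemma~\ref{lem:map} for powers of $\Cal S_n(K)$, and a direct commensurability argument for countability in place of the paper's identification $\Cal S_n(\Q_p) = \Gl_n(\Q_p)/\Gl_n(\Z_p)$.
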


\noindent
Hrushovski and Rideau~\cite{metastable} show that if $L$ is an algebraically closed valued field then the induced structure on $\Cal S_n(L)$ does not interpret an infinite field, but their techniques do not directly apply to the $p$-adic case.

\begin{proof}
Fix $n \geq 2$.
As there is a $K$-definable injection $\B \to \Cal S_2(K)$ and a natural injection $\Cal S_2(K) \to \Cal S_n(K)$ the first claim follows by Theorem~\ref{thm:2}.
The second claim follows by the proof of Theorem~\ref{thm:1} as $\Cal S_n(\Q_p) = \Gl_n(\Q_p)/\Gl_n(\Z_p)$ is countable.
\end{proof}

\section{Trace definibility}
\noindent
We discuss trace definibility and in particular prove Theorem~\ref{thm:field-ext}.
The first two claims of Proposition~\ref{prop:trace-basic} are immediate.
The third is an easy corollary to Fact~\ref{fact:shelah}.

\begin{prop}
\label{prop:trace-basic}
Let $\Sa M, \Sa O$ and $\Sa P$ be structures.
\begin{enumerate} 
\item Suppose that $\Sa O$ is trace definable in $\Sa M$ and $\Sa P$ is trace definable in $\Sa O$.
Then $\Sa P$ is trace definable in $\Sa M$.
\item If $\Sa O \prec \Sa M$ then $\Sa O$ is trace definable in $\Sa M$.
\item If $\Sa M$ is $\nip$, $\Sa O \prec \Sa M$, and $\Sa M$ is $|O|^+$-saturated, then $\Sa M$ trace defines $\Sh O$.
\end{enumerate}
\end{prop}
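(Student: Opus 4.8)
The plan is to check the three claims in turn; each is a short unwinding of the definition of trace definability.

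\emph{Claim (1).} Fix an injection $\tau : O \to M^m$ witnessing that $\Sa M$ trace defines $\Sa O$ and an injection $\sigma : P \to O^k$ witnessing that $\Sa O$ trace defines $\Sa P$, and let $\rho : P \to M^{mk}$ be the map obtained by applying $\tau$ to each of the $k$ coordinates of $\sigma$. This $\rho$ is injective, being a composite of injections. Given an $\Sa P$-definable $X \subseteq P^n$, I would first use $\sigma$ to pull it back to an $\Sa O$-definable $Z \subseteq O^{kn}$, and then use $\tau$ to pull $Z$ back to an $\Sa M$-definable $Y \subseteq M^{mkn}$. Chasing the two defining equations and using the identification $M^{mkn} = (M^{mk})^n$ shows that $Y$ witnesses trace definability of $X$ through $\rho$, so $\Sa M$ trace defines $\Sa P$ via $\rho$.

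\emph{Claims (2) and (3).} For (2), since $O \subseteq M$ take $m = 1$ with $\tau$ the inclusion; if $X \subseteq O^n$ is $\Sa O$-definable, say $X = \phi(O^n, a)$ with $a$ a tuple from $O$, then $Y := \phi(M^n, a)$ is $\Sa M$-definable and $Y \cap O^n = X$ because $\Sa O \prec \Sa M$, which is exactly the required condition. For (3), take $\tau$ to be the inclusion again. Since $\Sa O \prec \Sa M$ and $\Sa M$ is $|O|^+$-saturated, $\Sa M$ computes the externally definable subsets of the powers of $O$, the family of externally definable sets being independent of the choice of saturated elementary extension. Since $\Sa O$ is $\nip$, Fact~\ref{fact:shelah} gives that every $\Sh O$-definable $X \subseteq O^n$ is externally definable in $\Sa O$, hence of the form $Y \cap O^n$ for some $\Sa M$-definable $Y \subseteq M^n$; this $Y$ is the required witness, so $\Sa M$ trace defines $\Sh O$ via the inclusion.

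I do not anticipate a genuine obstacle; the paper itself flags (1) and (2) as immediate and (3) as an easy consequence of Fact~\ref{fact:shelah}. The two spots deserving a moment's care are the arity bookkeeping in (1) — checking that under the identification $M^{mkn} = (M^{mk})^n$ the map $\rho$ is applied coordinate by coordinate, so that the composite defining equation comes out correctly — and, in (3), the appeal to the fact that externally definable sets are computed by any single $|O|^+$-saturated elementary extension, so that $\Sa M$ alone suffices in place of a separate monster model.
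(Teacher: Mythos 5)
Your proof is correct and follows the route the paper itself implicitly takes: the paper declares (1) and (2) immediate and notes (3) is a corollary of Fact~\ref{fact:shelah}, and your argument simply unwinds those claims. The only thing worth noting is a small language quibble in (1): you speak of using $\sigma$ to ``pull back'' $X$ to $Z$, but strictly one is producing $Z$ whose trace along the coordinatewise $\sigma$ is $X$ (and then $Y$ whose trace along the coordinatewise $\tau$ is $Z$); your subsequent bookkeeping under $M^{mkn} = (M^{mk})^n$ makes clear you mean the right thing. In (3) you correctly observe that $\Sa O$ inherits $\nip$ from $\Sa M$ and that the $|O|^+$-saturated $\Sa M$ can serve as the ambient model used to compute external definability over $\Sa O$, which is exactly the point that makes the inclusion a witnessing injection.
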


\noindent
We view trace definibility as a weak notion of interpretability.

\begin{prop}
\label{prop:trace-interpret}
If $\Sa O$ is interpretable in $\Sa M$ then $\Sa O$ is trace definable in $\Sa M$.
\end{prop}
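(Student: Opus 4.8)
The plan is to unwind the definition of interpretation and take for $\tau$ (the inclusion into a Cartesian power of $M$ of) any section of the interpreting quotient map. Fix data witnessing that $\Sa O$ is interpretable in $\Sa M$: an $\Sa M$-definable set $X \sub M^n$, an $\Sa M$-definable equivalence relation $\sim$ on $X$ with class map $x \mapsto [x]$, and a bijection $\pi : X/{\sim} \to O$ such that for every $\Sa O$-definable $Y \sub O^k$ the set
$$ \widehat{Y} := \{ (x_1,\ldots,x_k) \in X^k : (\pi([x_1]),\ldots,\pi([x_k])) \in Y \} $$
is $\Sa M$-definable; this last property is built into the notion of interpretation (it holds for the basic relations and functions of $\Sa O$ by definition, and extends to all $\Sa O$-definable sets by the usual induction on formulas). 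Choose $\sigma : O \to X$ with $\pi([\sigma(a)]) = a$ for all $a \in O$, i.e.\ $\sigma$ picks one representative from each $\sim$-class; composing with the inclusion $X \sub M^n$ yields a map $\tau : O \to M^n$, which is injective because distinct elements of $O$ correspond under $\pi$ to distinct $\sim$-classes.

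I then claim that $\Sa M$ trace defines $\Sa O$ via $\tau$. Given an $\Sa O$-definable $Y \sub O^k$, set $Z := \widehat{Y}$, regarded as an $\Sa M$-definable subset of $M^{nk}$. For $a_1,\ldots,a_k \in O$ we have $\tau(a_i) = \sigma(a_i) \in X$ and $\pi([\sigma(a_i)]) = a_i$, so $(\tau(a_1),\ldots,\tau(a_k)) \in Z$ holds precisely when $(a_1,\ldots,a_k) \in Y$; hence $Y = \{ (a_1,\ldots,a_k) \in O^k : (\tau(a_1),\ldots,\tau(a_k)) \in Z \}$, which is exactly the condition in the definition of trace definibility. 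Since $\tau$ is injective, $\Sa M$ trace defines $\Sa O$.

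There is no real obstacle here: the whole content is the observation that the pullback along a chosen section, which the definition of trace definibility demands, is literally the graph-pullback that an interpretation supplies for free. The only points worth stating carefully are the reading of ``$\Sa O$-definable'' (with parameters, and including all primitive relations and functions of $\Sa O$), which is automatic from the definition of interpretation, and the fact that passing to a section keeps $\tau$ valued in a finite Cartesian power of $M$, as the definition of trace definibility requires, rather than in the imaginary sort $X/{\sim}$.
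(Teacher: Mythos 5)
Your proof is correct and follows essentially the same route as the paper: unwind the definition of interpretation, take a section $\tau$ of the quotient map onto $O$, and observe that for any $\Sa O$-definable $Y$ the $\Sa M$-definable preimage of $Y$ under the quotient pulls back along $\tau$ to give exactly $Y$. The notation differs slightly (the paper writes the quotient map directly as a surjection $\pi : E \to O$ rather than factoring through $X/{\sim}$), but the substance is identical.
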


\noindent
In the proof below $\pi$ will denote a certain map $E \to O$ and we will also use $\pi$ to denote the map $E^n \to O^n$ given by
$ \pi(a_1,\ldots,a_n) = (\pi(a_1),\ldots,\pi(a_n))$.

\begin{proof}
Suppose $\Sa O$ is interpretable in $\Sa M$.
Let $E \subseteq M^m$ be  $\Sa M$-definable, $\approx$ be an $\Sa M$-definable equivalence relation on $E$, and $\pi : E \to O$ be a surjection such that
\begin{enumerate} 
\item for all $a,b \in E$ we have $a \approx b$ if and only if $\pi(a) = \pi(b)$, and
\item if $X \subseteq O^n$ is $\Sa O$-definable  then $\pi^{-1}(X)$ is $\Sh M$-definable.
\end{enumerate}
Let $\tau : O \to E$ be a section of $\pi$ and $A = \tau(O)$.
So $A$ contains exactly one element from every $\approx$-class.
If $X \subseteq O^n$ is $\Sa O$-definable then $\pi^{-1}(X)$ is $\Sa M$-definable and $X$ is the set of $a \in O^n$ such that $\tau(a) \in \pi^{-1}(X)$.
\end{proof}

\begin{prop}
\label{prop:trace-new}
Let $\lambda$ be an uncountable cardinal.
Suppose that $\Sa M$ is $\lambda$-saturated and $\nip$.
Suppose $\Sh M$ trace defines $\Sa O$ and $|O| < \lambda$.
Then $\Sa M$ trace defines $\Sa O$.
\end{prop}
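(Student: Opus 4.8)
The plan is to reuse the trace-defining injection and only upgrade the witnessing sets, by combining Chernikov--Simon approximation with saturation. Fix an injection $\tau : O \to M^m$ via which $\Sh M$ trace defines $\Sa O$; since $\Sh M$ and $\Sa M$ have the same universe $M$, the very same $\tau$ is the injection we will use for $\Sa M$. As in the proof of Proposition~\ref{prop:trace-interpret} we also write $\tau$ for the induced (injective) map $O^n \to M^{mn}$. Now fix an $\Sa O$-definable $X \sub O^n$ together with an $\Sh M$-definable $Y \sub M^{mn}$ such that $X = \{ \bar a \in O^n : \tau(\bar a) \in Y \}$. By Fact~\ref{fact:shelah} the set $Y$ is externally definable in $\Sa M$, so by Fact~\ref{fact:cs} there is an $\Sa M$-definable family $(Y_c : c \in M^k)$ such that every finite subset of $Y$ is contained in some $Y_c \sub Y$.

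The next step is to locate a single parameter $c$ for which $Y_c$ already trace-defines $X$ via $\tau$, and this is where $\lambda$-saturation enters. Consider the partial type over $\Sa M$ in the variable $c$
$$ p(c) \;=\; \{\, \tau(\bar a) \in Y_c : \bar a \in X \,\} \;\cup\; \{\, \tau(\bar b) \notin Y_c : \bar b \in O^n \setminus X \,\}. $$
Since $\lambda$ is uncountable and $|O| < \lambda$ we have $|O^n| < \lambda$, so $p(c)$ is a type over a parameter set of size less than $\lambda$ (the tuples $\tau(\bar a)$ for $\bar a \in O^n$, together with the finitely many parameters defining the family $(Y_c)$). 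It is consistent: given $\bar a_1,\dots,\bar a_r \in X$ and $\bar b_1,\dots,\bar b_s \in O^n \setminus X$, each $\tau(\bar a_i)$ lies in $Y$ by the trace condition, so the defining property of the family yields $c$ with $\{\tau(\bar a_1),\dots,\tau(\bar a_r)\} \sub Y_c \sub Y$; and for each $j$, $\bar b_j \notin X$ forces $\tau(\bar b_j) \notin Y$ (again the trace condition), hence $\tau(\bar b_j) \notin Y_c$. By $\lambda$-saturation, $p(c)$ is realized by some $c_X \in M^k$, and then $Y_X := Y_{c_X}$ is $\Sa M$-definable and satisfies $\bar a \in X$ if and only if $\tau(\bar a) \in Y_X$, for every $\bar a \in O^n$. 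Carrying this out for each $\Sa O$-definable set shows that $\Sa M$ trace defines $\Sa O$ via $\tau$, as desired.

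The only genuinely new idea — hence the step to be careful about — is the construction of the type $p(c)$, rigged so that its finite satisfiability is exactly the Chernikov--Simon approximation property of the family $(Y_c)$; everything else is a direct application of Facts~\ref{fact:shelah} and \ref{fact:cs}. The one technical point to check is the cardinal bookkeeping that makes $\lambda$-saturation applicable, namely using the uncountability of $\lambda$ to pass from $|O| < \lambda$ to $|O^n| < \lambda$ and to absorb the finitely many parameters of the family.
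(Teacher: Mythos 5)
Your proof is correct and follows the same route as the paper: reduce to a fixed $\Sa O$-definable $X$, apply Facts~\ref{fact:shelah} and \ref{fact:cs} to obtain an $\Sa M$-definable approximating family, and use $\lambda$-saturation together with $|O^n| < \lambda$ to find a single parameter whose fiber traces out $X$. The only difference is that you explicitly write out the type whose consistency encodes the approximation property, where the paper just says ``an application of saturation''; this is the same argument, just spelled out.
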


\begin{proof}
We may suppose that $O \subseteq M^m$ and that for every $\Sa O$-definable $X \subseteq O^n$ there is an $\Sh M$-definable $Y \subseteq M^{mn}$ such that $O^n \cap Y = X$.
Fix an $\Sa O$-definable subset $X$ of $O^n$.
Let $Y$ be an $\Sh M$-definable subset of $M^{mn}$ such that $O^n \cap Y = X$.
Applying Facts~\ref{fact:shelah} and \ref{fact:cs} we obtain an $\Sa M$-definable family $(Z_b : b \in M^k)$ such that for every finite $B \subseteq Y$ we have $B \subseteq Z_b \subseteq Y$.
So for any finite $B \subseteq X$ there is $b \in M^k$ such that $B \subseteq Z_b$ and $Z_b \cap [O^n \setminus X] = \emptyset$.
As $|O^n| < \lambda$ an application of saturation yields $b \in M^k$ such that $X = O^n \cap Z_b$.
\end{proof}

\noindent
Theorem~\ref{thm:field-ext} is a special case of Proposition~\ref{prop:field-ext-str}.

\begin{prop}
\label{prop:field-ext-str}
Let $\lambda$ be an uncountable cardinal.
Suppose that $\Sa M$ is $\lambda$-saturated and $\nip$.
Suppose that $\Sh M$ interprets $\Sa O$.
If $\Sa P \prec \Sa O$ and $|P| < \lambda$ then $\Sa M$ trace defines $\Sa P$.
In particular if $\Sa M$ is $\aleph_1$-saturated and $\Sh M$ interprets an infinite field then $\Sa M$ trace defines an infinite field.
\end{prop}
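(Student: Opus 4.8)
The plan is to chain the three preceding propositions, with one descent to a small elementary substructure in order to meet the cardinality hypothesis of Proposition~\ref{prop:trace-new}.

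First I would note that, since $\Sh M$ interprets $\Sa O$, Proposition~\ref{prop:trace-interpret} applied with $\Sh M$ in the role of the ambient structure shows that $\Sh M$ trace defines $\Sa O$. Now suppose $\Sa P \prec \Sa O$. Then Proposition~\ref{prop:trace-basic}(2) shows that $\Sa O$ trace defines $\Sa P$, and transitivity of trace definability, Proposition~\ref{prop:trace-basic}(1), gives that $\Sh M$ trace defines $\Sa P$. At this point $\Sa M$ is $\lambda$-saturated and $\nip$, $\Sh M$ trace defines $\Sa P$, $\lambda$ is uncountable, and $|P| < \lambda$, so Proposition~\ref{prop:trace-new} yields that $\Sa M$ trace defines $\Sa P$. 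This proves the first assertion.

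For the final clause I would set $\lambda = \aleph_1$ and regard an infinite field interpreted in $\Sh M$ as a structure $\Sa O = (F,+,\times)$ in the language of rings. By downward L\"owenheim--Skolem there is a countable $\Sa P \prec \Sa O$; as $\Sa O$ is an infinite field so is $\Sa P$, and $|P| = \aleph_0 < \aleph_1$. Applying the first assertion gives that $\Sa M$ trace defines the infinite field $\Sa P$.

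All the real content sits in Propositions~\ref{prop:trace-interpret} and \ref{prop:trace-new} --- ultimately in Facts~\ref{fact:shelah} and \ref{fact:cs} --- so I do not anticipate a genuine obstacle here; the argument is a bookkeeping exercise in assembling results already in hand. The one point that has to be handled with a little care is the descent: the cardinality bound in Proposition~\ref{prop:trace-new} blocks a direct application to the possibly huge $\Sa O$, which is exactly why one must pass to $\Sa P$. This is harmless precisely because trace definability descends along elementary substructures by Proposition~\ref{prop:trace-basic}(2), because the clauses of Proposition~\ref{prop:trace-basic} are stated for arbitrary structures and so apply verbatim with $\Sh M$ as the ambient structure, and because in the intended application the only feature of $\Sa O$ that matters --- being an infinite field --- is inherited by $\Sa P$.
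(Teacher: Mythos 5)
Your proof is correct and follows exactly the same chain as the paper: Proposition~\ref{prop:trace-interpret} to get $\Sh M$ trace defines $\Sa O$, then Proposition~\ref{prop:trace-basic}(1) and (2) to descend to $\Sa P$, then Proposition~\ref{prop:trace-new} to pass from $\Sh M$ to $\Sa M$, with L\"owenheim--Skolem for the final clause. No differences worth noting.
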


\noindent
In \cite{field-in-Shelah} we described a $(2^{\aleph_0})^+$-saturated $\nip$ structure $\Sa H$ such that $\Sa H$ does not interpret an infinite group but $\Sh H$ interprets $(\R,+,\times)$.
So $\Sa H$ trace defines $(\R,+,\times)$.
The example presented in \cite{big-nip} also trace defines $(\R,+,\times)$ for the same reason.

\begin{proof}
The second claim follows from the first claim and L\"owenheim-Skolem.
Let $\Sa P$ be an elementary substructure of $\Sa O$ such that $|P| < \lambda$.
By Proposition~\ref{prop:trace-interpret} $\Sh M$ trace defines $\Sa O$.
The first two items of Proposition~\ref{prop:trace-basic} show that $\Sh M$ trace defines $\Sa P$.
So $\Sa M$ trace defines $\Sa P$ by Proposition~\ref{prop:trace-new}.
\end{proof}



\noindent
Finally, we show that if $\Sa M$ is $\nip$ then any structure which is interpretable in the Shelah completion of $\Sa M$ is trace definable in an elementary extension of $\Sa M$.

\begin{prop}
\label{prop:trace-final}
Suppose that $\Sa M$ is $\nip$ and that $\Sh M$ trace defines $\Sa O$.
Suppose that $\Sa N$ is an $|M|^+$-saturated elementary extension of $\Sa M$.
Then $\Sa N$ trace defines $\Sa O$.
In particular if $\Sh M$ interprets $\Sa P$ then $\Sa N$ trace defines $\Sa P$.
\end{prop}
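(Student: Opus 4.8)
The plan is to reuse the trace-defining map verbatim. Suppose $\Sh M$ trace defines $\Sa O$ via an injection $\tau \colon O \to M^m$; I claim that $\tau$, regarded as an injection $O \to M^m \subseteq N^m$, witnesses that $\Sa N$ trace defines $\Sa O$. (Note in passing that $\Sa N$ is $\nip$, being elementarily equivalent to $\Sa M$, though this is not actually needed.)

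First I would fix an $\Sa O$-definable set $X \subseteq O^n$ and, using the hypothesis, pick an $\Sh M$-definable $Y \subseteq M^{mn}$ with $X = \{(a_1,\dots,a_n) \in O^n : (\tau(a_1),\dots,\tau(a_n)) \in Y\}$. Since $\Sa M$ is $\nip$, Fact~\ref{fact:shelah} shows that $Y$ is externally definable in $\Sa M$. Because $\Sa M \prec \Sa N$ and $\Sa N$ is $|M|^+$-saturated, the externally definable subsets of $M^{mn}$ are precisely the traces on $M^{mn}$ of $\Sa N$-definable subsets of $N^{mn}$ — this is the saturation-invariance of external definability recorded just before Fact~\ref{fact:convex}. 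So I may fix an $\Sa N$-definable $\widetilde Y \subseteq N^{mn}$ with $M^{mn} \cap \widetilde Y = Y$.

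The key point is then that the image of the induced map $E^n \to N^{mn}$, sorry — of $\tau$ coordinatewise — lies inside $M^{mn}$: for $a_1,\dots,a_n \in O$ the tuple $(\tau(a_1),\dots,\tau(a_n))$ belongs to $M^{mn}$, so it lies in $\widetilde Y$ if and only if it lies in $M^{mn} \cap \widetilde Y = Y$. Hence
\[
\{(a_1,\dots,a_n) \in O^n : (\tau(a_1),\dots,\tau(a_n)) \in \widetilde Y\} = \{(a_1,\dots,a_n) \in O^n : (\tau(a_1),\dots,\tau(a_n)) \in Y\} = X .
\]
As $X$ was an arbitrary $\Sa O$-definable subset of some $O^n$, this shows $\Sa N$ trace defines $\Sa O$ via $\tau$. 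For the final sentence: if $\Sh M$ interprets $\Sa P$ then Proposition~\ref{prop:trace-interpret}, applied with $\Sh M$ in place of $\Sa M$, gives that $\Sh M$ trace defines $\Sa P$, and the first part then yields that $\Sa N$ trace defines $\Sa P$.

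There is no real obstacle here: the only things to get right are that external definability over $\Sa M$ is computed by any $|M|^+$-saturated elementary extension, in particular by $\Sa N$, and that, because $\tau$ takes values in $M^m$, replacing the externally definable witness $Y$ by its $\Sa N$-definable extension $\widetilde Y$ does not change which tuples of $O^n$ are captured. One does not even need to produce a single $\Sa N$-definable family doing the job for all $X$ simultaneously, nor any saturation beyond $|M|^+$.
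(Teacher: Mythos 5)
Your proof is correct and is essentially the paper's argument unrolled: the paper cites Proposition~\ref{prop:trace-basic}(3) (that $\Sa N$ trace defines $\Sh M$, which is exactly the Shelah/external-definability step you carry out by hand) and then Proposition~\ref{prop:trace-basic}(1) (transitivity, which is your observation that $\tau$ lands in $M^m$ so composing with the identity $M \hookrightarrow N$ preserves the traces), and it handles the final sentence via Proposition~\ref{prop:trace-interpret} exactly as you do. No gap; same route, just presented at a lower level rather than through the intermediate lemmas.
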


\begin{proof}
By Proposition~\ref{prop:trace-basic}$(3)$ $\Sa N$ trace defines $\Sh M$, so by Proposition~\ref{prop:trace-basic}$(1)$ $\Sa N$ trace defines $\Sa O$.
The second claim follows from the first claim by Proposition~\ref{prop:trace-interpret}.
\end{proof}

\noindent
In the remainder of this section we make a few more observations about trace definibility which are not directly connected to the main topic of this paper.

\subsection{Trace definibility and tameness properties}
A number of positive model-theoretic properties are equivalent to non-trace definibility of a particular countable homogeneous relational structure with a finite language.
We discuss two cases here: stability and $\nip$.
\newline

\noindent
It is easier to trace define relational structures with quantifier elimination.
We leave the verification of Proposition~\ref{prop:qe} to the reader.

\begin{prop}
\label{prop:qe}
Suppose that $L$ is a relational language, $T$ is an $L$-theory which admits quantifier elimination, $\Sa O \models T$, and $\tau : O \to M^m$ is an injection.
Suppose that for every $n$-ary relation symbol $R \in L$ there is a $\Sa M$-definable $Y \subseteq M^{mn}$ such that for all $(a_1,\ldots,a_n) \in O^n$ we have
$$ \Sa O \models R(a_1,\ldots,a_n) \quad \text{if and only if} \quad (\tau(a_1),\ldots,\tau(a_n)) \in X. $$
Then $\Sa M$ trace defines $\Sa O$ via $\tau$.
\end{prop}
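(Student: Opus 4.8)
The plan is to reduce, using quantifier elimination, to atomic formulas, and then to observe that the class of subsets of powers of $O$ that are trace defined by $\tau$ is closed under the operations needed to build arbitrary $\Sa O$-definable sets from atomic ones. Call $X \sub O^n$ \emph{good} if there is an $\Sa M$-definable $Y \sub M^{mn}$ with $X = \{(a_1,\dots,a_n)\in O^n : (\tau(a_1),\dots,\tau(a_n))\in Y\}$; so trace definability of $\Sa O$ via $\tau$ is precisely the assertion that every $\Sa O$-definable subset of every $O^n$ is good. First I would record the closure properties. The good subsets of a fixed $O^n$ form a Boolean algebra: if $X,X'$ are good via $Y,Y'$ then $X\cap X'$ is good via $Y\cap Y'$ and $O^n\setminus X$ is good via $M^{mn}\setminus Y$ (no hypothesis on $\tau$ is needed here, since on the $M$-side the tuples range over all of $M^{mn}$). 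Goodness is also preserved under reindexing of coordinates: if $X\sub O^k$ is good via $Y$ and $\sigma\colon\{1,\dots,k\}\to\{1,\dots,n\}$ is any map, then $\{(a_1,\dots,a_n): (a_{\sigma(1)},\dots,a_{\sigma(k)})\in X\}$ is good, witnessed by the preimage of $Y$ under the $\Sa M$-definable map $(M^m)^n\to(M^m)^k$ that selects and repeats blocks of $m$ coordinates according to $\sigma$.

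Next I would invoke quantifier elimination. Since $T$ admits quantifier elimination and $\Sa O\models T$, any $\Sa O$-definable subset of $O^n$, say defined by $\phi(x;b)$ with $b\in O^l$, is defined by $\psi(x;b)$ for some quantifier-free $\psi(x;z)$; as $L$ is relational, $\psi(x;b)$ is a finite Boolean combination of sets each defined by an atomic formula whose arguments are among $x_1,\dots,x_n$, the constants of $L$ (if any), and the coordinates of the parameter $b$. By the Boolean closure above it then suffices to show each such atomic set is good.

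Then I would handle the atomic cases. For $R(x_{i_1},\dots,x_{i_k})$ with the $x_{i_j}$ distinct this is exactly the hypothesis, and for a general atomic formula $R(t_1,\dots,t_k)$ with repeated arguments, constants, or parameters, I would start from the witness $Y_R\sub M^{mk}$ for $R$ on distinct variables, dispose of repeats and permutations by the reindexing closure, and deal with each constant $c$ (resp.\ parameter $b$) occurring as an argument by fixing the corresponding block of $m$ coordinates of $Y_R$ to the element $\tau(c^{\Sa O})\in M^m$ (resp.\ $\tau(b)\in M^m$): this keeps the set $\Sa M$-definable, now with parameters. For an equality atomic formula $t_i=t_j$ I would use that $\tau$ is injective: if both $t_i,t_j$ are variables $x_i,x_j$ then $\{(a_1,\dots,a_n): a_i=a_j\}$ is good via $\{(y_1,\dots,y_n)\in(M^m)^n : y_i=y_j\}$, and if one or both of $t_i,t_j$ is a constant or a parameter we again substitute its $\tau$-image. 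This is the only point at which injectivity of $\tau$ enters.

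I do not expect any genuine obstacle here — which is presumably why the paper leaves the verification to the reader — the only thing to be careful about is the bookkeeping: consistently matching a single $O$-coordinate with a block of $m$ coordinates on the $M$-side, and checking that substituting $\tau$-images of constants and parameters leaves the witnessing sets $\Sa M$-definable. One mild point worth stating explicitly at the start of the write-up is that ``$\Sa O$-definable'' and ``$\Sa M$-definable'' are meant with parameters, so that these substitutions are legitimate.
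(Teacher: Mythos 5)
The paper explicitly leaves the verification of this proposition to the reader, so there is no written proof to compare against; your argument is correct and is precisely the routine verification the author has in mind. The reduction via quantifier elimination to atomic formulas, the Boolean and reindexing closure of ``good'' sets, the substitution of $\tau$-images of constants and parameters into blocks of $m$ coordinates, and the use of injectivity of $\tau$ solely to handle equality atoms are all exactly the points one needs, and your remark that definability is understood with parameters is the right thing to flag.
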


\noindent
Proposition~\ref{prop:trace} is easy and left to the reader.

\begin{prop}
\label{prop:trace}
If $\Sa M$ is stable then any structure which is trace definable in $\Sa M$ is stable.
If $\Sa M$ is $\nip$ then any structure which is trace definable in $\Sa M$ is $\nip$.
\end{prop}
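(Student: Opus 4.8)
The plan is to unwind the definition of trace definibility and check that both stability and $\nip$ are preserved under the operation $X \mapsto \tau^{-1}(Y)$ that produces the trace-definable sets, using that these properties are witnessed by formulas and counting types.

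First I would set up notation: suppose $\Sa M$ trace defines $\Sa O$ via an injection $\tau : O \to M^m$, so for every $\Sa O$-definable $X \subseteq O^n$ there is an $\Sa M$-definable $Y \subseteq M^{mn}$ with $X = \{\bar a \in O^n : \tau(\bar a) \in Y\}$, where $\tau(\bar a) = (\tau(a_1),\ldots,\tau(a_n))$. The key observation is that a family of the form $\phi(O^n, c)$ for $c$ ranging over a parameter set pulls back, via $\tau$, to a definable family over $\Sa M$: if $\phi(x,y)$ is an $\Sa O$-formula with $|x| = n$, then the family $(\phi(\Sa O, c) : c)$ is a subfamily of all $\Sa O$-definable subsets of $O^n$, and applying trace definibility uniformly — or, more carefully, observing that one may take $Y$ to depend definably on parameters — one gets an $\Sa M$-definable family $(Y_d : d \in M^k)$ such that every $\phi(\Sa O, c)$ equals $\{\bar a \in O^n : \tau(\bar a) \in Y_d\}$ for some $d$. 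Here one should be slightly careful: trace definibility as stated is about single definable sets, not uniform families, so to get a single $\Sa M$-formula handling the whole family I would either invoke compactness or note that $\Sa O$-definable families are themselves $\Sa O$-definable sets in more variables and apply the definition there; this bookkeeping is the one genuinely fiddly point.

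With that in hand, the stability statement goes as follows: if $\Sa O$ is unstable, there is an $\Sa O$-formula $\phi(x,y)$ with the order property, i.e. for arbitrarily large $N$ there are $a_1,\ldots,a_N \in O^{|x|}$ and $b_1,\ldots,b_N \in O^{|y|}$ with $\Sa O \models \phi(a_i,b_j) \iff i \leq j$. Pulling $\phi$ back through $\tau$ (in both variable blocks) yields an $\Sa M$-definable relation $\psi(x',y')$ on $M^{m|x|} \times M^{m|y|}$ such that $\psi(\tau(a_i),\tau(b_j))$ holds iff $\phi(a_i,b_j)$ holds, so $\psi$ has the order property in $\Sa M$, contradicting stability of $\Sa M$. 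The $\nip$ case is identical with the independence property replacing the order property: an $\nip$-violating $\phi(x,y)$ in $\Sa O$ shatters arbitrarily large finite sets, and the pulled-back relation $\psi$ then shatters arbitrarily large finite subsets of $\tau(O^{|y|}) \subseteq M^{m|y|}$, so $\Sa M$ has IP.

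I expect the main obstacle to be purely cosmetic: making sure the pullback of an $\Sa O$-formula $\phi(x,y)$ with two variable blocks is handled correctly by the definition of trace definibility, which a priori only speaks about subsets of $O^n$ for a single block of $n$ variables. The resolution is to treat $\phi(x,y)$ as defining a subset of $O^{|x|+|y|}$ and apply trace definibility to that set, then separate the variables on the $\Sa M$ side; since $\tau$ acts coordinatewise this causes no trouble. Everything else — that the order property and the independence property are the obstructions to stability and $\nip$ respectively, and that they transfer backward along any map identifying the relevant relation — is standard, which is why the paper leaves this to the reader.
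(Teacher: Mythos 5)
Your proof is correct, and since the paper leaves this proposition to the reader it is presumably exactly the intended argument: treat $\phi(x,y)$ as defining a single subset of $O^{|x|+|y|}$, apply trace definability once to obtain an $\Sa M$-definable set cut out by some $\psi(x',y')$, and push witnesses for the order property (resp.\ IP) through $\tau$ coordinatewise. Your opening paragraph about pulling back uniform definable families is a detour that you yourself correct in its second half --- no uniformity over parameters is needed, since the order property and IP are each properties of a single partitioned formula, and one application of the definition to the single set $\phi(O^{|x|}\times O^{|y|})$ suffices.
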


\noindent
The \textbf{random graph} is as usual the Fra\"iss\'e limit of the class of finite (symmetric irreflexive) graphs.
Recall that the theory of the random graph is $\mathrm{IP}$ and has quantifier elimination.

\begin{prop}
\label{prop:trace-0}
Let $\Sa M$ be $\aleph_1$-saturated.
Then $\Sa M$ is unstable if and only if $\Sa M$ trace defines $(\Q,<)$ and $\Sa M$ is $\mathrm{IP}$ if and only if $\Sa M$ trace defines the random graph.
\end{prop}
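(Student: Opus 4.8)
\noindent
The two ``if'' directions require nothing new: $(\Q,<)$ is unstable and the random graph is $\mathrm{IP}$, so by Proposition~\ref{prop:trace} a structure that trace defines one of them is itself unstable, respectively not $\nip$. For the two ``only if'' directions the plan is to build an explicit trace definition out of an order-property (resp.\ independence-property) formula together with saturation and Proposition~\ref{prop:qe}, exploiting that both $\mathrm{Th}(\Q,<)$ and the theory of the random graph eliminate quantifiers.

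\smallskip
\noindent
Suppose $\Sa M$ is unstable and fix a formula $\phi(\bar x,\bar y)$ over $\emptyset$ with the order property. By compactness there are, in a sufficiently saturated elementary extension, tuples $(\bar a_q)_{q\in\Q}$, $(\bar b_q)_{q\in\Q}$ with $\phi(\bar a_p,\bar b_q)$ holding iff $p<q$; as this is a type in countably many variables over $\emptyset$ and $\Sa M$ is $\aleph_1$-saturated, we may take these tuples in $\Sa M$. Let $\tau\colon\Q\to M^m$ ($m=|\bar x|+|\bar y|$) send $q$ to $(\bar a_q,\bar b_q)$; this is injective, since for $p<q$ any $r\in(p,q)$ satisfies $\phi(\bar a_p,\bar b_r)\wedge\neg\phi(\bar a_q,\bar b_r)$. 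The formula $\psi\big((\bar x_1,\bar y_1),(\bar x_2,\bar y_2)\big):=\phi(\bar x_1,\bar y_2)$ satisfies $\psi(\tau(p),\tau(q))$ iff $p<q$, so Proposition~\ref{prop:qe} (applied with $\{(u,v):\psi(u,v)\}$ interpreting $<$) shows $\Sa M$ trace defines $(\Q,<)$.

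\smallskip
\noindent
Suppose instead $\Sa M$ is $\mathrm{IP}$ and fix $\phi(\bar x,\bar y)$ with the independence property, witnessed by $(\bar b_i)_{i\in\omega}$ so that each Boolean pattern $\{\phi(\bar x,\bar b_i):i\in S\}\cup\{\neg\phi(\bar x,\bar b_i):i\notin S\}$ is consistent. Let $(\omega,R)$ be the random graph, with neighbourhoods $N(i)=\{j:j\mathrel{R}i\}$. For each $i$ the requirement ``$\phi(\bar x,\bar b_j)\leftrightarrow j\in N(i)$ for all $j$'' is consistent; assembled in disjoint variable blocks these form, by compactness, a consistent countable type over $\{\bar b_j:j\in\omega\}$, realised in $\Sa M$ (again by $\aleph_1$-saturation) by $(\bar a_i)_{i\in\omega}$ with $\phi(\bar a_i,\bar b_j)$ iff $j\mathrel{R}i$. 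Set $\tau(i)=(\bar a_i,\bar b_i)$ and take the symmetric formula $\psi\big((\bar x_1,\bar y_1),(\bar x_2,\bar y_2)\big):=\phi(\bar x_1,\bar y_2)\wedge\phi(\bar x_2,\bar y_1)$; then $\psi(\tau(i),\tau(j))$ holds iff $j\mathrel{R}i\wedge i\mathrel{R}j$, i.e.\ iff $i\mathrel{R}j$ (both sides failing when $i=j$), and $\tau$ is injective because $\bar b_i=\bar b_j$ would put $i$ and $j$ into the same $N(k)$ for every $k$, impossible in the random graph unless $i=j$. Proposition~\ref{prop:qe} then gives that $\Sa M$ trace defines the random graph.

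\smallskip
\noindent
I expect the only real points to be: extracting the $\Q$- and $\omega$-indexed configurations inside $\Sa M$ itself, which is exactly where $\aleph_1$-saturation enters, through the standard fact that an $\aleph_1$-saturated structure realises every consistent type in countably many variables over a countable parameter set; and the combinatorial trick of taking $N(i)$ to be the random-graph neighbourhood, so that the ``mutual satisfaction'' formula reproduces the symmetric edge relation from a single asymmetric $\mathrm{IP}$ formula. The rest is routine bookkeeping with Proposition~\ref{prop:qe}.
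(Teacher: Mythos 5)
Your proof is correct and follows essentially the same route as the paper: the right-to-left directions go through Proposition~\ref{prop:trace}, the unstable direction builds a $\Q$-indexed configuration from an order-property formula inside $\Sa M$ by $\aleph_1$-saturation and appeals to quantifier elimination for $(\Q,<)$ via Proposition~\ref{prop:qe}, and the $\mathrm{IP}$ direction produces a formula-defined copy of the random graph and applies Proposition~\ref{prop:qe} again. The one difference is that the paper obtains the random-graph copy by citing Lemma 2.2 of Laskowski--Shelah on Karp complexity and the independence property, whereas you prove this inline by prescribing, for each vertex $i$, the neighbourhood pattern $N(i)$ as the Boolean pattern that $\phi(\bar a_i,\bar b_\bullet)$ must realize and then symmetrizing; this makes the argument self-contained at the cost of a bit more bookkeeping, and is worth knowing as the direct construction underlying the cited lemma.
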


\begin{proof}
The right to left direction of both claims follows by Proposition~\ref{prop:trace}.
Suppose that $\Sa M$ is unstable.
Applying $\aleph_1$-saturation we obtain a sequence $(a_q : q \in \Q)$ of elements of some $M^m$ and a formula $\phi(x,y)$ such that for all $p,q \in \Q$ we have $\Sa M \models \phi(a_p,a_q)$ if and only if $p < q$.
Let $\tau : \Q \to M^m$ be given by declaring $\tau(q) = a_q$ for all $q \in \Q$.
As $(\Q,<)$ admits quantifier elimination an application of Proposition~\ref{prop:qe} shows that $\Sa M$ trace defines $(\Q,<)$ via $\tau$.
\newline

\noindent
Suppose that $\Sa M$ is $\mathrm{IP}$.
Applying \cite[Lemma 2.2]{Laskowski-shelah-karp} and $\aleph_1$-saturation there is an isomorphic copy $(V,E)$ of the random graph such that $V \subseteq M^m$ for some $m$ and a formula $\varphi(x,y)$ such that for all $a,b \in V$ we have $(a,b) \in E$ if and only if $\Sa M \models \varphi(a,b)$.
An application of Proposition~\ref{prop:qe} shows that $\Sa M$ trace defines $(V,E)$ via the identity $V \to M^m$.
\end{proof}

\noindent
We conjecture that any infinite field trace definable in an o-minimal structure is either real closed or algebraically closed and any infinite field trace definable in $K$ is a finite extension of a $p$-adically closed field.
Proposition~\ref{prop:trace-distal} is a tiny step in this direction (both o-minimal structures and $p$-adically closed fields are distal.)

\begin{prop}
\label{prop:trace-distal}
A distal structure cannot trace define an infinite field of positive characteristic.
\end{prop}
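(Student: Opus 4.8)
The plan is to combine the structural theory of distal structures with known preservation results for trace definibility. First I would recall that distality is preserved under trace definibility: if $\Sa M$ is distal and trace defines $\Sa O$, then $\Sa O$ is distal. This is the distal analogue of Proposition~\ref{prop:trace} and should follow by the same kind of argument, since distality (like $\nip$ and stability) is witnessed by combinatorial configurations of formulas applied to tuples, and a trace definition pulls such configurations back faithfully; in fact distality can be characterized by the existence of ``strongly honest definitions'' or by the indiscernible-sequence criterion (every indiscernible sequence indexed by a dense order without endpoints, with a midpoint inserted, remains indiscernible iff it was before), and these criteria transfer along $\tau$. So it suffices to show that no infinite field of positive characteristic is distal.

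The second ingredient is the classification of distal fields, or at least enough of it. The cleanest route is: an infinite distal field is in particular $\nip$ and distal, and by a theorem on $\nip$ fields of positive characteristic (Kaplan–Scanlon–Wagner and subsequent work), an infinite $\nip$ field of positive characteristic is either finite, or Artin–Schreier closed, and in the stable case is separably closed. A distal field cannot be stable (distal theories that are stable are essentially trivial — no infinite distal group is stable, since distal plus stable forces the theory to be, in the relevant sense, finite or interpret nothing infinite of that kind). So a distal infinite field of positive characteristic would be unstable, $\nip$, hence by the structure theory it would be, up to finite extension, an Artin–Schreier closed field; but such fields (e.g. their henselian valued refinements, or the field itself via the Artin–Schreier map) carry an $\mathrm{IP}$ or at least non-distal configuration. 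Concretely, I would argue: an infinite $\nip$ field of characteristic $p>0$ that is not separably closed is not distal because it admits a definable valuation (by work of Johnson and others in the finite-dp-rank case, or Jahnke–Simon–Walsberg in the $\nip$ case) and nontrivially valued fields are not distal (a nontrivial henselian valuation gives an indiscernible sequence that becomes non-indiscernible after inserting a midpoint); and an infinite separably closed field of positive characteristic is stable, hence not distal unless trivial, contradiction.

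The key steps in order: (1) prove distality is preserved under trace definibility, using either the honest-definition characterization or the indiscernible-midpoint characterization and checking it pulls back through $\tau$ — I expect this to be short and parallel to Proposition~\ref{prop:trace}; (2) observe an infinite distal field is $\nip$; (3) invoke the $\nip$-field dichotomy in positive characteristic: separably closed (hence stable) or admits a nontrivial definable valuation; (4) rule out the separably closed case because infinite stable fields are not distal; (5) rule out the valued case because a field with a nontrivial definable henselian valuation is not distal. Then conclude: a distal structure cannot trace define such a field, since it would then have to be distal itself.

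The main obstacle is step (3)–(5): the full $\nip$-fields picture in positive characteristic rests on deep and partly conjectural input (the Artin–Schreier/definable-valuation results are only fully established under extra hypotheses such as finite dp-rank). The honest way to handle this in the paper is probably to restrict to what is unconditionally known — namely that an infinite $\nip$ field of positive characteristic is Artin–Schreier closed (this \emph{is} a theorem of Kaplan–Scanlon–Wagner), combined with the fact that an infinite Artin–Schreier closed $\nip$ field which is not separably closed is not distal, and that infinite separably closed fields are not distal because they are stable. So the real content is bundling ``Artin–Schreier closed but not separably closed $\nip$ field $\Rightarrow$ not distal'': I would prove this via the definable Artin–Schreier map having infinite kernel $\mathbb{F}_p$ and the resulting additive structure, or by citing that such fields interpret nontrivial valued fields; making this last link rigorous and self-contained is where most of the work lies, and I would expect to lean on an already-published lemma to the effect that a pure field which is not separably closed and has characteristic $p$ cannot be distal.
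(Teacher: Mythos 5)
Your plan hinges on step (1): the claim that distality is preserved under trace definibility. That claim is false, and this is exactly why the paper does not argue this way. Trace definibility is a very weak relation; in particular, a distal structure trace defines plenty of non-distal structures. For a concrete counterexample, $(\R,<)$ is o-minimal, hence distal, and it trace defines the pure infinite set (take $\tau$ to be any injection into $\R$; the definable sets in the empty language are just boolean combinations of diagonals $x_i = x_j$ and singletons, all of which are traces of $(\R,<)$-definable sets). But the pure infinite set is stable and infinite, hence not distal. More generally, any infinite stable structure is non-distal, yet stable structures can easily be trace definable in distal ones. The reason the analogy with Proposition~\ref{prop:trace} breaks down is that $\nip$ and stability are characterized by the \emph{absence} of a combinatorial configuration, which survives passage to a trace; distality is characterized by the \emph{presence} of something (strong honest definitions), and tracing can discard the very formulas you would need for that.

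The paper's actual proof routes around this by using the strong Erd\"os--Hajnal property instead of distality itself. Strong Erd\"os--Hajnal concerns finite subsets of $M^m \times M^n$ and finite bipartite configurations, and it visibly transfers along any injection $\tau$, so it \emph{is} preserved under trace definibility (Proposition~\ref{prop:eh-trace}); this is the correct ``downward closed'' invariant here. Then Chernikov--Starchenko give that distal implies strong Erd\"os--Hajnal, Proposition~\ref{prop:trace} gives that the traced field is $\nip$, Kaplan--Scanlon--Wagner gives that an infinite $\nip$ field of characteristic $p$ contains $\overline{\F}_p$, and incidence combinatorics over finite fields (as Chernikov--Starchenko already observe) shows such a field fails strong Erd\"os--Hajnal. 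Contradiction, with no appeal to conjectural $\nip$-field classification.

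Your steps (3)--(5) are an additional problem even setting (1) aside: the dichotomy ``separably closed or carries a definable valuation'' for $\nip$ fields of positive characteristic is open in general (it is essentially the henselianity/Shelah conjecture, known only under finite dp-rank or similar hypotheses), and ``Artin--Schreier closed but not separably closed $\nip$ field is not distal'' is not a theorem you can cite. You were right to flag these as soft spots, but the real fix is not to patch them: it is to abandon the ``distality transfers'' scaffold altogether and argue via strong Erd\"os--Hajnal, which gives an unconditional and much shorter proof.
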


\noindent
Chernikov and Starchenko~\cite{CS} show that a distal structure cannot interpret an infinite field.
Proposition~\ref{prop:trace-distal} follows by the same proof.
A structure $\Sa O$ satisfies the strong Erd\"os-Hajnal property if for every $\Sa O$-definable subset $X$ of $O^m \times O^n$ there is a real number $\delta > 0$ such that for any finite $A \subseteq M^m, B \subseteq M^n$ there are $A' \subseteq A$, $B' \subseteq B$ such that $|A'| \geq \delta |A|$, $|B'| \geq \delta B$, and $A' \times B'$ is either contained in or disjoint from $X$.
Proposition~\ref{prop:eh-trace} is clear from the definitions.

\begin{prop}
\label{prop:eh-trace}
If $\Sa M$ has the strong Erd\"os-Hajnal property than any structure trace definable in $\Sa M$ has the strong Erd\"os-Hajnal property.
\end{prop}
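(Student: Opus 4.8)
The plan is to transport the Erdős–Hajnal partitions along the trace map. Fix a structure $\Sa M$ with the strong Erdős–Hajnal property and suppose $\Sa M$ trace defines $\Sa O$ via an injection $\tau \colon O \to M^d$. For each $k$ let $\tau_k \colon O^k \to M^{dk}$ denote the coordinatewise map $\tau_k(a_1,\dots,a_k) = (\tau(a_1),\dots,\tau(a_k))$; since $\tau$ is injective so is $\tau_k$, hence $\tau_k$ restricts to a bijection from any finite subset of $O^k$ onto its image. Now I would fix an $\Sa O$-definable $X \subseteq O^m \times O^n$, view it as an $\Sa O$-definable subset of $O^{m+n}$, and invoke trace definability to obtain an $\Sa M$-definable $Y \subseteq M^{d(m+n)}$ with $X = \tau_{m+n}^{-1}(Y)$. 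Identifying $M^{d(m+n)}$ with $M^{dm} \times M^{dn}$, this exhibits $Y$ as an $\Sa M$-definable subset of $M^{dm} \times M^{dn}$ for which $(\bar a, \bar b) \in X$ iff $(\tau_m(\bar a), \tau_n(\bar b)) \in Y$, for all $\bar a \in O^m$ and $\bar b \in O^n$.

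Next I would apply the strong Erdős–Hajnal property of $\Sa M$ to $Y \subseteq M^{dm} \times M^{dn}$ to obtain a real $\delta > 0$, and claim this $\delta$ works for $X$. Given finite $A \subseteq O^m$ and $B \subseteq O^n$, set $\widetilde A := \tau_m(A)$ and $\widetilde B := \tau_n(B)$; then $|\widetilde A| = |A|$ and $|\widetilde B| = |B|$. The property for $\Sa M$ gives $\widetilde A' \subseteq \widetilde A$ and $\widetilde B' \subseteq \widetilde B$ with $|\widetilde A'| \geq \delta |\widetilde A|$, $|\widetilde B'| \geq \delta |\widetilde B|$, and $\widetilde A' \times \widetilde B'$ either contained in or disjoint from $Y$. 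Put $A' := A \cap \tau_m^{-1}(\widetilde A')$ and $B' := B \cap \tau_n^{-1}(\widetilde B')$; then $\tau_m$ and $\tau_n$ restrict to bijections $A' \to \widetilde A'$ and $B' \to \widetilde B'$, so $|A'| = |\widetilde A'| \geq \delta |A|$ and $|B'| = |\widetilde B'| \geq \delta |B|$. Since $\{(\tau_m(\bar a), \tau_n(\bar b)) : \bar a \in A',\ \bar b \in B'\}$ is exactly $\widetilde A' \times \widetilde B'$ and membership in $X$ pulls back to membership in $Y$, the set $A' \times B'$ is contained in (respectively disjoint from) $X$ precisely when $\widetilde A' \times \widetilde B'$ is contained in (respectively disjoint from) $Y$. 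This yields the $A'$, $B'$ required by the definition, completing the argument.

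I do not anticipate a real obstacle: this is a routine transport along $\tau$. The only things to be careful about are the identification $M^{d(m+n)} \cong M^{dm} \times M^{dn}$, which lets the single coordinatewise clause in the definition of trace definability produce a $Y$ respecting the relevant product decomposition, and the use of injectivity of $\tau$ to ensure that passing between $A$ and $\widetilde A$ (and between $B$ and $\widetilde B$) preserves all the relevant cardinalities and cardinality ratios.
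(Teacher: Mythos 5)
Your argument is correct: it is exactly the routine transport of Erd\H{o}s--Hajnal partitions along the injection $\tau$ that the paper has in mind when it dismisses the proposition as ``clear from the definitions'' and offers no written proof. The key points---injectivity of $\tau$ preserving cardinalities of finite sets, and the identification $M^{d(m+n)} \cong M^{dm}\times M^{dn}$ making the trace clause compatible with the bipartite form of the definition---are handled properly, so there is no gap.
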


\noindent
Chernikov and Starchenko~\cite{CS} show that any distal structure has the strong Erd\"os-Hajnal property.
They also observe that the failure of the strong Erd\"os-Hajnal property for infinite fields of positive characteristic is a direct consequence of well-known facts from incidence combinatorics over finite fields together with the theorem of Kaplan, Scanlon, and Wagner~\cite{Kaplan2011} that an infinite $\nip$ field of positive characteristic contains the algebraic closure of its prime subfield.
Proposition~\ref{prop:trace-distal} follows from these facts and Proposition~\ref{prop:trace}.
\newline

\noindent
Another natural conjecture is that a divisible ordered abelian group cannot trace define $(\R,<,+,t \mapsto \lambda t)$ for any $\lambda \in \R \setminus \Q$.

\section{Modularity?}
\noindent
This paper is motivated by the following question.
$$ \textit{Is there a good notion of modularity for $\nip$ structures?} $$
There is a good notion of modularity for stable structures which we refer to as ``one-basedness".
There is also a good notion of modularity for o-minimal structures, an o-minimal structure is modular if and only if algebraic closure is locally modular.
Fact~\ref{fact:ps} follows from the Peterzil-Starchenko trichotomy~\cite{PS-Tri}.

\begin{fact}
\label{fact:ps}
Suppose $\Sa M$ is an o-minimal expansion of an ordered group.
Then the following are equivalent:
\begin{enumerate}
\item algebraic closure in $\Sa M$ is locally modular,
\item $\Sa M$ is a reduct of an ordered vector space over an ordered division ring,
\item $\Sa M$ does not interpret an infinite field.
\end{enumerate}
\end{fact}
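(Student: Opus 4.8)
The plan is to prove the cycle $(3)\Rightarrow(1)\Rightarrow(2)\Rightarrow(3)$, using the Peterzil--Starchenko trichotomy~\cite{PS-Tri} as a black box. First a reduction: an o-minimal expansion of an ordered group has definable Skolem functions, hence eliminates imaginaries, so in $(3)$ the phrase ``interprets an infinite field'' may be replaced by ``defines an infinite field'' (on a definable subset of some $M^k$). Also, since the group operation and order are globally definable, no point of $M$ is trivial for $\operatorname{acl}$ in $\Sa M$; so by the trichotomy, for each $a \in M$ the localization of $\Sa M$ at $a$ is either linear (a neighborhood of $a$ carries, over parameters, the structure of an ordered vector space over an ordered division ring) or $\mathrm{CF}$ (a real closed field is definable on a neighborhood of $a$, with parameters).

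For $(3)\Rightarrow(1)$ I would argue the contrapositive: if $\operatorname{acl}$ in $\Sa M$ is not locally modular, then by the trichotomy some point $a$ is $\mathrm{CF}$, and the real closed field definable near $a$ is an infinite field defined (hence interpreted) in $\Sa M$, contradicting $(3)$. For $(2)\Rightarrow(3)$, anything interpretable in a reduct of an ordered vector space $\Sa V := (M,+,<,(\lambda\cdot)_{\lambda\in D})$ is interpretable in $\Sa V$ itself, so it suffices to see $\Sa V$ defines no infinite field (again using that $\Sa V$ eliminates imaginaries); but $\operatorname{acl}$ in $\Sa V$ is modular, being given by $D$-linear span, so no point of $\Sa V$ is $\mathrm{CF}$, and the trichotomy yields the conclusion.

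The substance is $(1)\Rightarrow(2)$. Assuming $\operatorname{acl}$ is locally modular, the remarks above show no point is $\mathrm{CF}$, so every point is linear: each $a\in M$ has a convex neighborhood $U_a$ on which $\Sa M$ induces an ordered $D_a$-vector space structure for some ordered division ring $D_a$. The task is to glue these local structures into a single global one. Using the ambient group I would translate each $U_a$ to a neighborhood of $0$; the resulting scalar actions become germs at $0$ of $\Sa M$-definable partial endomorphisms of $(M,+,<)$. One then checks that this set of germs is closed under addition, composition, and multiplicative inverse — so that it forms a single ordered division ring $D$ independent of $a$ — and that each scalar multiplication extends, by o-minimality and connectedness, to a total $\Sa M$-definable map $M\to M$. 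This realizes $(M,+,<,(\lambda\cdot)_{\lambda\in D})$ as an ordered $D$-vector space all of whose primitive operations are $\Sa M$-definable, i.e. exhibits $\Sa M$ as a reduct of it.

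The main obstacle is exactly this gluing step in $(1)\Rightarrow(2)$: the trichotomy is purely local, and promoting the locally defined vector-space structures to one coherent ordered vector space over a single division ring — in particular verifying closure of the local endomorphism-germs under inversion and the global definability of scalar multiplication — is where all the real work lies, and is carried out in the expansions-of-ordered-groups portion of \cite{PS-Tri}.
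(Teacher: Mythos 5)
The paper gives no proof of this Fact at all; it is stated with only the remark that it ``follows from the Peterzil--Starchenko trichotomy~\cite{PS-Tri}.'' Your unpacking of the cycle $(3)\Rightarrow(1)\Rightarrow(2)\Rightarrow(3)$ is a correct and reasonable account of how the Fact does follow from that theorem, and it matches the intended derivation: the elimination-of-imaginaries reduction, the observation that an expansion of an ordered group has no trivial points, the equivalence of local modularity with the absence of CF-type points, and the delegation of the global gluing in $(1)\Rightarrow(2)$ to the expansions-of-ordered-groups results in \cite{PS-Tri} (and the Loveys--Peterzil work it builds on) are all exactly where the weight should sit.
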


\noindent
Here $(1)$ is an abstract modularity notion, $(2)$ asserts that definable sets are ``affine objects" in a reasonable sense, and $(3)$ asserts the absence of a certain algebraic structure.
Analogues of Fact~\ref{fact:ps} should hold for other well-behaved classes of $\nip$ structures.
Examples such as $\Sa B$ indicate that $(3)$ should be replaced with ``$\Sh M$ does not interpret an infinite field" or ``$\Sa M$ does not trace define an infinite field".
In \cite{big-nip} we gave an example of a weakly o-minimal expansion $\Sa Q$ of $(\Q,+,<)$ such that algebraic closure in $\Sa Q$ agrees with algebraic closure in $(\Q,+,<)$ and if $\Sa Q \prec \Sa N$ is $(2^{\aleph_0})^+$-saturated then $\Sh N$ interprets $(\R,+,\times)$.
So analogues of Fact~\ref{fact:ps} will requires a new notion of modularity which is not defined in terms of algebraic closure.
\newline

\noindent
We record some reasonable conditions for a notion of modularity for $\nip$ structures (but we do not insist that all of these conditions be satisfied, see the remarks below).
\begin{enumerate}[label=(A\arabic*)]
    \item Modularity implies $\nip$,
    \item Modularity is preserved under elementary equivalence.
    \item Infinite fields are not modular.
    \item A structure is modular if and only if its Shelah completion is modular.
    \item Ordered abelian groups and ordered vector spaces are modular.
    \item An o-minimal structure is modular if and only if it is modular in the sense of Peterzil-Starchenko.
    \item Monadically $\nip$ structures are modular (so trees are modular and the expansion of a linear order $(O,<)$ by any collection of monotone functions $O \to O$ is modular).
    \item Reasonable valued abelian groups are modular.
    \item Any structure which is trace definable in a modular structure is modular (so in particular modularity is preserved under interpretations).
\end{enumerate}

\noindent
Note that $($A$4)$ follows from $($A$2)$, $($A$9)$, and Proposition~\ref{prop:trace-basic}.
But it is possible that $($A$9)$ is too strong, as one-basedness is not preserved under reducts, see~\cite{Evans2005}.
However, we do not insist that modularity and one-basedness agree over stable theories, so it may be the case that a reduct of a one-based structure is always modular.
\newline

\noindent
A structure $\Sa M$ is \textbf{monadically $\nip$} if the expansion of $\Sa M$ by \textit{all} subsets of $M$ is $\nip$.
See \cite{Simon-dp} for a proof that trees and expansions of linear orders by monotone functions are monadically $\nip$.
\newline

\noindent
The ``reasonable" in $($A$8)$ is necessary as any structure is interpretable in some valued abelian group, see Schmitt~\cite{Schmitt1984}.
It is not clear what ``reasonable" should mean, at a minimum the valued additive group of a valued field should be modular.
We also expect the expansion of $(\Z,+)$ by \textit{all} $p$-adic valuations to be modular (this structure is $\nip$ by \cite{AldE}).
\newline



\noindent
Finally, we record a question of Hrushovski.
We do not believe that this question has appear in print.
Question~\ref{ques:hrush} is motivated by the theorem of Chernikov and Starchenko~\cite{CS} that a distal structure cannot interpret a field of positive characteristic (see Proposition~\ref{prop:trace-distal} above).

\begin{qst}
\label{ques:hrush}
Let $\F$ be a finite field, $V$ be an $\F$-vector space, and $\Sa V$ be a distal expansion of $V$.
Must $\Sa V$ be (in some sense) modular?
\end{qst}

\noindent
A more precise question: Can $\Sa V$ trace define an infinite field? 
\newpage

\bibliographystyle{abbrv}
\bibliography{NIP}
\end{document}